\documentclass{amsart}
\usepackage{amssymb}
\usepackage{amsmath}
\usepackage{amsfonts}
\usepackage{graphicx,color}

\setcounter{MaxMatrixCols}{10}

\newtheorem{theorem}{Theorem}
\theoremstyle{plain}
\newtheorem{acknowledgement}{Acknowledgements}

\newtheorem{lemma}{Lemma}

\newtheorem{proposition}{Proposition}

\numberwithin{equation}{section}

\begin{document}
\title[Singular $(p(x),q(x))$- Laplacian systems ]{Existence and regularity
of solutions for a class of singular $(p(x),q(x))$- Laplacian systems }
\subjclass[2010]{35J75; 35J48; 35J92}
\keywords{Singular system; $p(x)$-Laplacian; Sub-supersolution; Regularity,
Fixed point.}

\begin{abstract}
In this paper we study the existence of positive smooth solutions for a
class of singular $(p(x),q(x))$- Laplacian systems by using sub and
supersolution methods.
\end{abstract}

\author{Claudianor O. Alves}
\address{Claudianor O. Alves\\
Universidade Federal de Campina Grande, Unidade Acad\^{e}mica de Matem\'{a}
tica, CEP:58429-900, Campina Grande - PB, Brazil.}
\email{coalves@mat.ufcg.edu.br}
\thanks{C.O. Alves was partially supported by CNPq/Brazil 304036/2013-7 and
INCT-MAT}
\author{Abdelkrim Moussaoui}
\address{Abdelkrim Moussaoui\\
Biology Department, A. Mira Bejaia University, Targa Ouzemour, 06000 Bejaia,
Algeria.}
\email{abdelkrim.moussaoui@univ-bejaia.dz}
\thanks{A. Moussaoui was supported by CNPq/Brazil 402792/2015-7.}
\maketitle

\section{Introduction}

\label{sec1}

In the present paper, we study the existence of solution for the following
class of singular $(p(x),q(x))$-Laplacian equations
\begin{equation}
\left\{
\begin{array}{ll}
-\Delta _{p(x)}u=\lambda u^{\alpha _{1}(x)}v^{\beta _{1}(x)} & \text{in }%
\Omega , \\
-\Delta _{q(x)}v=\lambda u^{\alpha _{2}(x)}v^{\beta _{2}(x)} & \text{in }%
\Omega , \\
u,v>0 & \text{in }\Omega , \\
u,v=0 & \text{on }\partial \Omega ,%
\end{array}%
\right.  \tag{$P$}  \label{p}
\end{equation}%
where $\Omega $ is a bounded domain in $%
\mathbb{R}
^{N}$ $\left( N\geq 2\right) $ with $C^{2}$ boundary $\partial \Omega $ and $%
\lambda >0$ is a parameter. Here, $\Delta _{p(x)}$ and $\Delta _{q(x)}$
stand for the $p(x)$-Laplacian and $q(x)$-Laplacian operators respectively,
that is,
\begin{equation*}
\Delta _{p(x)}u=div(\left\vert \nabla u\right\vert ^{p(x)-2}\nabla u)\quad %
\mbox{and}\quad \Delta _{q(x)}v=div(\left\vert \nabla v\right\vert
^{q(x)-2}\nabla v)
\end{equation*}%
with $p,q\in C^{1}(\overline{\Omega })$ and
\begin{equation}
\begin{array}{l}
1<p^{-}\leq p^{+}<N\text{ \ and \ }1<q^{-}\leq q^{+}<N.%
\end{array}
\label{33}
\end{equation}%
Thought out this paper, we denote by%
\begin{equation*}
\begin{array}{l}
s^{-}=\displaystyle\inf_{x\in \Omega }s(x)\text{ \ and \ }s^{+}=\displaystyle%
\sup_{x\in \Omega }s(x).%
\end{array}%
\end{equation*}

A solution of (\ref{p}) is understood in the weak sense, that is, a pair $%
(u,v)\in W_{0}^{1,p(x)}(\Omega )\times W_{0}^{1,q(x)}(\Omega )$, where $u,v$
are positive in $\Omega $ and satisfy
\begin{equation}
\left\{
\begin{array}{cc}
\int_{\Omega }|\nabla u|^{p(x)-2}\nabla u\nabla \varphi \ dx & =\int_{\Omega
}u^{\alpha _{1}(x)}v^{\beta _{1}(x)}\varphi \ dx \\
\int_{\Omega }|\nabla v|^{q(x)-2}\nabla v\nabla \psi \ dx & =\int_{\Omega
}u^{\alpha _{2}(x)}v^{\beta _{2}(x)}\psi \ dx%
\end{array}%
\right.  \label{7}
\end{equation}%
for all $(\varphi ,\psi )\in W_{0}^{1,p(x)}(\Omega )\times
W_{0}^{1,q(x)}(\Omega )$.

The main interest of this work is that the nonlinearities in the right hand
side of equations in (\ref{p}) can exhibit singularities when the variables $%
u$ and $v$ approach zero. This occur through the variable exponents which
are allowed to be negative. In this context, we will consider two situations
regarding the structure of system (\ref{p}):
\begin{equation}
\alpha _{2}^{-},\beta _{1}^{-}>0\text{ \ (cooperative structure)}  \label{h3}
\end{equation}
and
\begin{equation}
\alpha _{2}^{+},\beta _{1}^{+}<0\text{ \ (competitive structure).}
\label{h4}
\end{equation}
For system (\ref{p}) associated with (\ref{h3}), the right term in the first
(resp. second) equation of (\ref{p}) is increasing in $v$ (resp. $u$), which
do not occur for (\ref{p}) under (\ref{h4}). In addition of (\ref{h3}), we
assume
\begin{equation}
\begin{array}{l}
\alpha _{2}^{+}<p^{-}-1,\text{ \ \ }\beta _{1}^{+}<q^{-}-1,\text{ \ \ }%
\alpha _{1}^{+},\beta _{2}^{+}<0\text{ \ \ and \ }\alpha _{1}^{-},\beta
_{2}^{-}>-1/N.%
\end{array}
\label{h1}
\end{equation}%
For (\ref{p}) under competitive structure (\ref{h4}), we also assume
assumptions:
\begin{equation}
\left\{
\begin{array}{l}
0>\alpha _{1}^{+}\geq \alpha _{1}^{-}>\max \{-\frac{1}{N},-(p^{-}-1)\} \\
0>\beta _{2}^{+}\geq \beta _{2}^{-}>\max \{-\frac{1}{N},-(q^{-}-1)\}.%
\end{array}%
\right.  \label{h2}
\end{equation}

This type of problem is rare in the literature. Actually, according to our
knowledge, singular system (\ref{p}) was examined only when the exponent
variable functions $p(\cdot ),q(\cdot ),\alpha _{i}(\cdot )$ and $\beta
_{i}(\cdot )$, $i=1,2$, are reduced to be constants. In this case, $\Delta
_{p(x)}$ and $\Delta _{q(x)}$become the well-known $p$-Laplacian and $q$%
-Laplacian operators. For a complete overview on the study of the constant
exponent case, we refer to \cite{AM, dkm, MM}\ for system (\ref{p}) with
cooperative structure, while we quote \cite{MM2, MM3} for the study of
competitive structure in (\ref{p}).

The $p(x)$-Laplacian operator possesses more complicated nonlinearity than
the $p$-Laplacian. For instance, it is inhomogeneous and in general, it has
no first eigenvalue, that is, the infimum of the eigenvalues of $p(x)$%
-Laplacian equals $0$ (see \cite{FZZ2}). Thus, transposing the results
obtained with the p-Laplacian to the problems arising the $p(x)$-Laplacian
operator is not easy task. The study of these problems are often very
complicated and require relevant topics of nonlinear functional analysis,
especially the theory of variable exponent Lebesgue and Sobolev spaces (see,
e.g., \cite{dhhr} and its abundant reference).

Partial differential equations involving the $p(x)$-Laplacian arise, for
instance, as a mathematical model for problems involving electrorheological
fluids and image restorations, see \cite%
{Acerbi1,Acerbi2,Antontsev,CLions,Chen,Ru}. This explains the intense
research on this subject in the last decades, see for example the papers
\cite%
{Alves2,Alves3,AlvesBarreiro,AlvesFerreira,AlvesFerreira1,AlvesSouto,Fan1,Fan,FanZhao0,FZ1,FZZ, FanShenZhao,BSS,BSS1,FuZa,MR, QZ}
and their references.

The main results of the present paper provide the existence and regularity
of (positive) solutions for problem (\ref{p}) under assumptions (\ref{h3})
and (\ref{h4}). Our first result is related to cooperative case and it is
formulated as follows.

\begin{theorem}
\label{T1} Under assumptions (\ref{h3}) and (\ref{h1}), system (\ref{p}) has
a positive solution $\left( u,v\right) $ in $C^{1,\nu }(\overline{\Omega }%
)\times C^{1,\nu }(\overline{\Omega }),$ for certain $\nu \in (0,1)$ and $%
\lambda >0$ large. Moreover, there exists a constant $c>0$ such that%
\begin{equation*}
u(x),v(x)\geq cd(x)\text{ \ as }x\rightarrow \partial \Omega ,
\end{equation*}%
where $d(x):=dist(x,\partial \Omega ).$
\end{theorem}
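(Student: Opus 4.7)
The plan is to prove existence by the sub–supersolution method for systems, exploiting the cooperative structure (\ref{h3}) so that the right-hand side of each equation of (\ref{p}) is nondecreasing in the off-diagonal variable. The singularities on the diagonals ($\alpha_1^+,\beta_2^+<0$) are handled by truncating the reactions between an ordered pair of sub- and supersolutions and then invoking a Schauder fixed-point argument on the resulting regularized decoupled problem. The boundary estimate $u,v\ge c\,d(x)$ will come automatically from the construction of the subsolution, via a Hopf-type lower bound for the $p(x)$-Laplacian.

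To build the subsolution I would take $(\underline u,\underline v)=(\sigma\Phi_p,\sigma\Phi_q)$, where $\Phi_p,\Phi_q\in C^{1,\nu}(\overline\Omega)$ are positive solutions of the torsion-type problems $-\Delta_{p(x)}\Phi_p=1$ and $-\Delta_{q(x)}\Phi_q=1$ with zero Dirichlet data; by the $C^{1,\nu}$ regularity theory of Fan–Zhao and a Hopf-type estimate one has $\Phi_p,\Phi_q\ge c\,d(x)$ in $\Omega$. Since (\ref{h1}) yields $\alpha_1^+,\beta_2^+<0$ and $\beta_1^-,\alpha_2^->0$, the reaction in the first subsolution inequality behaves like $\lambda\sigma^{\alpha_1(x)+\beta_1(x)}\Phi_p^{\alpha_1(x)}\Phi_q^{\beta_1(x)}$ while the left-hand side is of order $\sigma^{p(x)-1}$; choosing $\sigma$ small and $\lambda$ large enough, the subsolution inequalities hold pointwise in $\Omega$, and the same for the second equation. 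For the supersolution I would use $(\overline u,\overline v)=(M\Psi_p,M\Psi_q)$ with $\Psi_p,\Psi_q$ positive smooth strict supersolutions of the principal operators and $M$ large: the crucial growth restrictions $\alpha_2^+<p^--1$ and $\beta_1^+<q^--1$ from (\ref{h1}) guarantee that the sublinear cross-terms are dominated by $-\Delta_{p(x)}(M\Psi_p)\sim M^{p(x)-1}$. The ordering $\underline u\le\overline u$, $\underline v\le\overline v$ is achieved by picking $M$ sufficiently large relative to $\sigma$.

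With such an ordered pair in hand, I would study the truncated system obtained by replacing $u,v$ in each reaction by $\max(\underline u,\min(u,\overline u))$ and the analogous truncation for $v$. The resulting right-hand sides lie in $L^{s}(\Omega)$ for some $s>N$ thanks to the integrability afforded by $\alpha_1^-,\beta_2^->-1/N$ in (\ref{h1}). A Schauder fixed-point argument applied to the decoupled operator $(u,v)\mapsto(U,V)$, where $(U,V)$ solves the truncated system with $(u,v)$ frozen on the right, produces a solution of the truncated problem; the weak comparison principle for the $p(x)$- and $q(x)$-Laplacian together with the cooperative monotonicity then forces any such fixed point to lie in $[\underline u,\overline u]\times[\underline v,\overline v]$, so the truncation is inactive and the pair actually solves (\ref{p}). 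The $C^{1,\nu}(\overline\Omega)$ regularity finally follows from the Fan and Fan–Zhao $C^{1,\alpha}$ estimates for the variable-exponent Laplacian with $L^{s}$ right-hand sides, $s>N$.

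The main obstacle will be the simultaneous calibration of the three free parameters $\sigma$, $M$ and $\lambda$: the subsolution inequality requires $\lambda\sigma^{\alpha_1^{-}+\beta_1^{-}+1-p^{-}}$ to be large, the supersolution inequality constrains $M$ from below in terms of $\lambda$, and the ordering $\sigma\Phi\le M\Psi$ couples the two. The hypothesis that $\lambda$ is large in Theorem \ref{T1} is exactly what makes this calibration feasible. A secondary delicate point is that, because the $p(x)$-Laplacian is inhomogeneous and lacks a true first eigenvalue, the standard constant-exponent trick of using an eigenfunction is unavailable; one must rely on the torsion functions $\Phi_p,\Phi_q$ and their Hopf lemma, and be careful when applying the weak comparison principle to the singular truncated equations, where the integrability condition $\alpha_1^-,\beta_2^->-1/N$ is essential to justify the test-function computations.
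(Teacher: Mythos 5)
Your overall scheme (truncate the reactions between an ordered pair of barriers, apply Schauder's fixed point theorem to the frozen decoupled problem, use comparison to remove the truncation, and get $C^{1,\nu}$ regularity from singular right-hand sides in a suitable Lebesgue class) is exactly the framework of Theorem \ref{T2}. The genuine gap is in the construction of the barriers: neither your subsolution nor your supersolution satisfies the required differential inequalities near $\partial \Omega$ under \eqref{h3} and \eqref{h1}. For the subsolution $(\sigma \Phi_p,\sigma\Phi_q)$, the first inequality needs $-\Delta_{p(x)}(\sigma\Phi_p)\leq \lambda(\sigma\Phi_p)^{\alpha_1(x)}(\sigma\Phi_q)^{\beta_1(x)}$. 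Since $\alpha_1<0<\beta_1$, nothing in \eqref{h1} prevents $\alpha_1(x)+\beta_1(x)>0$ near $\partial\Omega$ (e.g. $\beta_1^-\geq 1$, $\alpha_1$ close to $0$); then the right-hand side behaves like $d(x)^{\alpha_1(x)+\beta_1(x)}\to 0$ at the boundary, while the left-hand side stays of size $\sigma^{p(x)-1}$ (in fact $-\Delta_{p(x)}(\sigma\Phi_p)=\sigma^{p(x)-1}\bigl(1-\ln\sigma\,|\nabla\Phi_p|^{p(x)-2}\nabla\Phi_p\cdot\nabla p\bigr)$, so it is not even exactly homogeneous), hence is bounded away from zero in a boundary strip. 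For fixed $\sigma$ and $\lambda$ the inequality therefore fails on a set of positive measure near $\partial\Omega$, and no calibration of $\sigma$, $M$, $\lambda$ can repair this. Symmetrically, your supersolution $M\Psi_p$ vanishes on $\partial\Omega$, so when $\alpha_1(x)+\beta_1(x)<0$ (also allowed, since $\alpha_1^->-1/N$ while $\beta_1^-$ may be small) the term $\lambda\,\overline{u}^{\,\alpha_1(x)}\overline{v}^{\,\beta_1(x)}$ blows up like $d(x)^{\alpha_1(x)+\beta_1(x)}$ at the boundary while $-\Delta_{p(x)}(M\Psi_p)$ remains bounded; the same problem occurs in the second equation with $\alpha_2+\beta_2$.

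This boundary difficulty is precisely what the paper's construction is designed to avoid, and your proposal is missing both devices. The subsolution there is not a small multiple of the torsion function but the solution of an auxiliary problem \eqref{30}--\eqref{30*} whose datum is \emph{negative} in the strip $\Omega_\delta$ of \eqref{70}; since the reaction $\lambda u^{\alpha_1(x)}v^{\beta_1(x)}$ is nonnegative, the subsolution inequality is then trivial near the boundary, while in $\Omega\setminus\overline{\Omega}_\delta$ all factors are bounded between $\delta$-dependent constants and $\lambda^{\sigma}$ with $\sigma<1$ is absorbed by $\lambda$ large; Lemma \ref{L2} still gives $\underline{u}\geq w_1/2\geq \tfrac12\min\{\delta,d(x)\}$, which is what yields the stated estimate $u,v\geq c\,d(x)$. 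The supersolution is taken as the solution of \eqref{20} on an enlarged domain $\tilde\Omega\supset\overline\Omega$, so it is bounded \emph{below by a positive constant on} $\overline\Omega$; consequently the singular factors $\overline{u}^{\,\alpha_1(x)}$, $\overline{v}^{\,\beta_2(x)}$ are uniformly bounded on $\overline\Omega$ and the choice of $\bar\sigma$ in \eqref{3} lets $\lambda^{\bar\sigma}$ dominate the cross terms. Without replacing your barriers by constructions of this type (or some equivalent mechanism that neutralizes the behaviour of $d(x)^{\alpha_1+\beta_1}$ and $d(x)^{\alpha_2+\beta_2}$ at $\partial\Omega$), the proof does not go through; the remaining steps of your plan, including the $L^{s}$, $s>N$, integrability coming from $\alpha_1^-,\beta_2^->-1/N$ and the $C^{1,\alpha}$ regularity, are consistent with Lemma \ref{L1} and Theorem \ref{T2}.
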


The second main result deals with the competitive structure and it has the
following statement.

\begin{theorem}
\label{T12} Assume (\ref{h4}) and (\ref{h2}) hold with%
\begin{equation}
\begin{array}{l}
\alpha _{1}^{-}+\beta _{1}^{-}>-\frac{1}{N}\text{ \ and \ }\alpha
_{2}^{-}+\beta _{2}^{-}>-\frac{1}{N}.%
\end{array}
\label{h4**}
\end{equation}%
Then, system (\ref{p}) has a positive solution $\left( u,v\right) $ in $%
C^{1,\nu }(\overline{\Omega })\times C^{1,\nu }(\overline{\Omega })$, for $%
\nu \in (0,1)$ and $\lambda >0$ large. Moreover, there exists a constant $%
c^{\prime }>0$ such that%
\begin{equation*}
u(x),v(x)\geq c^{\prime }d(x)\text{ \ as }x\rightarrow \partial \Omega ,
\end{equation*}
\end{theorem}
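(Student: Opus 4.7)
The overall strategy is the sub-supersolution method combined with a Schauder-type fixed-point argument, adapted to handle the competitive structure and the variable exponents. Unlike the cooperative case of Theorem \ref{T1}, the right-hand sides here are decreasing in the ``other'' variable, so the usual monotone iteration breaks down; we instead mimic the approach used for the constant-exponent analogues in \cite{MM2,MM3}, modified to accommodate the inhomogeneity and the absence of a first eigenvalue for $\Delta_{p(x)}$.

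\emph{Step 1: Construction of sub- and supersolutions.} I would first produce a pair of positive functions $(\underline{u},\underline{v})\in C^{1,\nu}(\overline{\Omega})^2$ bounded below by $c\,d(x)$ near $\partial\Omega$, and a pair of positive constants (or smooth functions) $(\overline{u},\overline{v})$ with $\underline{u}\leq \overline{u}$, $\underline{v}\leq \overline{v}$, satisfying the appropriate differential inequalities in the weak sense. A natural choice for the subsolution is to take solutions of auxiliary torsion-type problems $-\Delta_{p(x)}\underline{u}=\lambda\,g(x)$ in $\Omega$, $\underline{u}=0$ on $\partial\Omega$, for a bounded $g>0$; these are known to behave like $d(x)$ at the boundary, and the conditions $\alpha_1^->-(p^--1)$, $\beta_2^->-(q^--1)$ in (\ref{h2}) ensure the inequalities
\[
\underline{u}^{\alpha_1(x)}\overline{v}^{\beta_1(x)} \text{ (resp. }\overline{u}^{\alpha_2(x)}\underline{v}^{\beta_2(x)}\text{)}
\]
are controlled by these torsion sources for $\lambda$ large. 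The supersolution will be taken as large constants (or a suitable Dirichlet problem with a large constant right-hand side), using that $\alpha_i^+,\beta_i^+<0$ to make the nonlinearities small when $(u,v)$ is large.

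\emph{Step 2: Fixed-point operator on the order interval.} On the conical order interval $K=[\underline{u},\overline{u}]\times[\underline{v},\overline{v}]\subset C^0(\overline{\Omega})^2$, define $\mathcal{T}:(z,w)\mapsto(U,V)$ where $(U,V)$ is the unique weak solution of the decoupled system
\[
-\Delta_{p(x)}U=\lambda z^{\alpha_1(x)}w^{\beta_1(x)},\qquad -\Delta_{q(x)}V=\lambda z^{\alpha_2(x)}w^{\beta_2(x)},
\]
with homogeneous Dirichlet data. Since $(z,w)\geq (\underline{u},\underline{v})\geq (cd,cd)$ and all exponents are negative, the right-hand sides are pointwise bounded by $C\,d(x)^{\alpha_i^-+\beta_i^-}$; hypothesis (\ref{h4**}) then puts them in $L^N(\Omega)$ (and indeed better), so existence and uniqueness of $(U,V)$ follow from the standard theory for the $p(x)$-Laplacian, and Fan's regularity results (together with those of Fan--Zhao) yield $(U,V)\in C^{1,\nu}(\overline{\Omega})^2$ with a uniform Hölder bound. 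Invariance $\mathcal{T}(K)\subset K$ is obtained via the weak comparison principle for $\Delta_{p(x)}$, using that freezing $(z,w)$ between the sub- and supersolution makes the right-hand side lie between the values used in Step 1.

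\emph{Step 3: Schauder fixed point and conclusion.} Continuity of $\mathcal{T}$ on $K$ (in $C^0$-topology) follows from the dominated convergence of the right-hand sides, combined with the continuity of the inverse $(-\Delta_{p(x)})^{-1}:W^{-1,p'(x)}\to W^{1,p(x)}_0$. Compactness comes from the uniform $C^{1,\nu}$ bound and compactness of the embedding $C^{1,\nu}\hookrightarrow C^1\hookrightarrow C^0$. Schauder's theorem thus produces a fixed point $(u,v)\in K$, which is a weak solution of (\ref{p}); the lower bound $u,v\geq c'd(x)$ is inherited from $\underline{u},\underline{v}$, and the $C^{1,\nu}$ regularity from the uniform bound.

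The main obstacle I anticipate is Step 1: building a subsolution with the correct boundary decay $cd(x)$ in the variable-exponent setting, where no first eigenfunction is available and homogeneity scaling fails. One has to replace the usual eigenfunction construction by a carefully tuned torsion solution and verify the sub-solution inequality uniformly in $x$, exploiting $\alpha_i^->-(p^--1)$ and the freedom to take $\lambda$ large. Step 2 also requires some care in verifying that the composite singular source $z^{\alpha_i(x)}w^{\beta_i(x)}$ is admissible as a distribution on the whole interval $K$, for which (\ref{h4**}) is exactly the right integrability condition.
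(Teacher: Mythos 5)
Your Step 2 is where the argument breaks down, and it is precisely the point where the competitive structure bites. For the frozen map $\mathcal{T}(z,w)=(U,V)$ with $-\Delta_{p(x)}U=\lambda z^{\alpha_1(x)}w^{\beta_1(x)}$, invariance of $K=[\underline{u},\overline{u}]\times[\underline{v},\overline{v}]$ requires the \emph{crossed} inequalities: since all exponents are negative, $\sup_{(z,w)\in K}\lambda z^{\alpha_1(x)}w^{\beta_1(x)}=\lambda\,\underline{u}^{\alpha_1(x)}\underline{v}^{\beta_1(x)}$ and $\inf_{(z,w)\in K}\lambda z^{\alpha_1(x)}w^{\beta_1(x)}=\lambda\,\overline{u}^{\alpha_1(x)}\overline{v}^{\beta_1(x)}$, so you would need $-\Delta_{p(x)}\overline{u}\geq \lambda\,\underline{u}^{\alpha_1(x)}\underline{v}^{\beta_1(x)}$ and $-\Delta_{p(x)}\underline{u}\leq \lambda\,\overline{u}^{\alpha_1(x)}\overline{v}^{\beta_1(x)}$. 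The inequalities you arrange in Step 1 (bounds involving $\underline{u}^{\alpha_1(x)}\overline{v}^{\beta_1(x)}$, i.e.\ the diagonal/one-sided versions) do not imply these, so the comparison-principle claim ``the right-hand side lies between the values used in Step 1'' is false as stated. Moreover your supersolution cannot satisfy even the needed upper barrier: with $\underline{u},\underline{v}\sim d(x)$ near $\partial\Omega$, the quantity $\lambda\,\underline{u}^{\alpha_1(x)}\underline{v}^{\beta_1(x)}\sim \lambda d(x)^{\alpha_1(x)+\beta_1(x)}$ blows up at the boundary (since $\alpha_1+\beta_1<0$), while a constant, or a torsion solution with bounded datum, has $-\Delta_{p(x)}\overline{u}$ bounded; so no choice of ``large constants'' works. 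This is exactly why the paper's supersolution pair $(u_1,v_1)$ is defined through problems whose right-hand side is the \emph{singular} datum $\lambda^{\sigma}d(x)^{\alpha_i(x)+\beta_i(x)}$ in the boundary strip $\Omega_\delta$, and why hypothesis (\ref{h4**}) is needed there (it makes this datum admissible for the $L^\infty$/$C^{1,\nu}$ estimates of Lemma \ref{L1}).

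Even with the correct barriers, the paper does not attempt the crossed inequalities: Proposition \ref{P1} only proves the diagonal ones, $-\Delta_{p(x)}u_0\leq\lambda u_0^{\alpha_1(x)}v_0^{\beta_1(x)}$ and $-\Delta_{p(x)}u_1\geq\lambda u_1^{\alpha_1(x)}v_1^{\beta_1(x)}$, and the loss of monotonicity is then compensated inside Theorem \ref{T3} by a penalization device: the frozen problem is modified by adding $\rho\,\tilde{z}_2\max\{d(x)^{\gamma_1(x)},|\tilde{z}_1|^{p(x)-2}\tilde{z}_1,|u|^{p(x)-2}u\}$ (and symmetrically for $v$), with $\rho$ chosen large using the bounds on $\partial f/\partial v$ and $\partial g/\partial u$, so that the truncated right-hand side becomes increasing in the other variable and the order-interval comparison goes through. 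Making this work also requires the boundary growth estimates $u_1\leq c_0 d(x)^{\theta_1}$, $v_1\leq c_1 d(x)^{\theta_2}$ with $\theta_i\approx 1$ (estimate (\ref{82})) and the compatibility $\gamma_i+\theta_j\geq -1$ of condition (iii), so that the penalization terms remain admissible distributions. None of this machinery (or any substitute for it, such as a proof of the crossed inequalities) appears in your proposal, so as written the fixed point you obtain is not shown to lie in $K$, and the argument does not close.
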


The proofs of Theorems \ref{T1} and \ref{T12} are chiefly based on Theorems %
\ref{T2} and \ref{T3} stated in Section \ref{sec3}, respectively, which are
a version of the sub-supersolution method for quasilinear singular elliptic
systems involving variable exponents. They are shown via Schauder's fixed
point theorem together with adequate truncations. It is worth pointing out
that in these Theorems no sign condition is required on the right-hand side
nonlinearities and so they can be used for large classes of quasilinear
singular problems involving $p(x)$-Laplacian operator. However, due to
competitive structure of the problem in Theorem \ref{T3}, the nonlinearities
are required to be more regular in order to offset the loss of the
monotonicity. A significant feature of our result lies in the obtaining of
the sub- and supersolution. This is achieved by the choice of suitable
functions with an adjustment of adequate constants.

Another important point discussed in this paper concerns the regularity of
solutions for singular problems involving $p(x)$-Laplacian operator.
According to our knowledge, this topic is a novelty. We emphasize that the
regularity result is crucial in the proof of Theorems \ref{T2} and \ref{T3},
besides ensuring the smoothness of the obtained solutions of problem (\ref{p}%
) in Theorems \ref{T1} and \ref{T12}.

The plan of the paper is as follows: In Section \ref{sec2} we prove some
technical results. In Section \ref{sec3} we show two general results which
will be used in the proof of our main results while in Sections \ref{sec4}
and \ref{sec5} we prove the Theorems \ref{T1} and \ref{T12} respectively.

\section{Technical results}

\label{sec2}

Let $L^{p(x)}(\Omega )$ be the generalized Lebesgue space that consists of
all measurable real-valued functions $u$ satisfying%
\begin{equation*}
\begin{array}{l}
\rho _{p(x)}(u)=\int_{\Omega }|u(x)|^{p(x)}dx<+\infty ,%
\end{array}%
\end{equation*}%
endowed with the Luxemburg norm%
\begin{equation*}
\begin{array}{l}
\left\Vert u\right\Vert _{p(x)}=\inf \{\tau >0:\rho _{p(x)}(\frac{u}{\tau }%
)\leq 1\}.%
\end{array}%
\end{equation*}%
The variable exponent Sobolev space $W_{0}^{1,p(\cdot )}(\Omega )$ is
defined by%
\begin{equation*}
\begin{array}{l}
W_{0}^{1,p(x)}(\Omega )=\{u\in L^{p(x)}(\Omega ):|\nabla u|\in
L^{p(x)}(\Omega )\}.%
\end{array}%
\end{equation*}%
The norm $\left\Vert u\right\Vert _{1,p(x)}=\left\Vert \nabla u\right\Vert
_{p(x)}$ makes $W_{0}^{1,p(x)}(\Omega )$ a Banach space, for more details
see \cite{FZ}. In the sequel, corresponding to $1<p(x)<+\infty $, we denote $%
p(x)^{\prime }=\frac{p(x)}{p(x)-1}$.\newline

In \cite[Lemma 3.2]{QZ}, Zhang has proved that there are $\delta ,\lambda
_{0}>0$ such that function
\begin{equation*}
w(x)=\left\{
\begin{array}{l}
d(x),\,d(x)<\delta , \\
\delta +\int_{\delta }^{d(x)}\left( \frac{\delta -t}{\delta }\right) ^{\frac{%
2}{p^{-}1}},\,\delta \leq d(x)\leq 2\delta, \\
\delta +\int_{\delta }^{2\delta }\left( \frac{\delta -t}{\delta }\right) ^{%
\frac{2}{p^{-}1}},\,\delta \leq d(x)\leq 2\delta,%
\end{array}%
\right.
\end{equation*}%
belongs to $C^{1}(\overline{\Omega })\cap C_{0}(\Omega )$ and it is a
subsolution of the problem
\begin{equation}
\left\{
\begin{array}{ll}
-\Delta _{p(x)}u=\lambda u^{\gamma (x)} & \text{ in }\Omega , \\
u>0 & \text{ in }\Omega , \\
u=0 & \text{ on }\partial \Omega ,%
\end{array}%
\right.  \label{4}
\end{equation}%
for $\lambda \geq \lambda _{0}$ and $-1<\gamma ^{-}\leq \gamma ^{+}<0$.
According to definition of $w$, we have
\begin{equation}
\left\{
\begin{array}{ll}
w(x)=d(x) & \text{for }d(x)<\delta \\
\delta \leq w(x)\leq C_{\delta } & \text{for }d(x)\geq \delta ,%
\end{array}%
\right.  \label{w}
\end{equation}%
where $\delta ,C_{\delta }$ are positive constants independents of $\lambda $%
.

\begin{lemma}
\label{L6} Let $u$ the solution of (\ref{4}) given in \cite{QZ} for $\lambda
$ large enough. Then, for $\delta >0$ small enough, it holds
\begin{equation}
\min \{\delta ,d(x)\}\leq u(x)\leq C\lambda ^{\frac{1}{p^{-}-1}}\quad %
\mbox{in}\text{ }\Omega ,  \label{EQU1}
\end{equation}
where $C,\delta >0$ are constants independent of $\lambda $.
\end{lemma}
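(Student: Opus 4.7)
I will handle the two inequalities in (\ref{EQU1}) by rather different means. The lower bound is immediate from the construction of $u$ in \cite{QZ}, which is based on a sub-supersolution iteration using $w$ as subsolution for $\lambda \geq \lambda_0$; the solution produced there satisfies $u(x) \geq w(x)$ in $\Omega$. Reading the two cases in (\ref{w}), namely $w(x) = d(x)$ if $d(x) < \delta$ and $w(x) \geq \delta$ otherwise, yields $w(x) \geq \min\{\delta, d(x)\}$ in $\Omega$, which is the left inequality.

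For the upper bound I propose to construct a global supersolution $\bar u = A\zeta$, where $A = A(\lambda) > 0$ will be chosen and $\zeta$ is an auxiliary function with good behaviour. Fix a ball $B \supset \overline{\Omega}$ and let $\zeta$ be the positive weak solution of $-\Delta_{p(x)}\zeta = 1$ in $B$ with $\zeta = 0$ on $\partial B$; by standard regularity $\zeta \in C^{1,\alpha}(\overline{B})$, and $c_0 \leq \zeta \leq C_0$ on $\overline{\Omega}$ for positive constants $c_0, C_0$. Since $-\Delta_{p(x)}\bar u = A^{p(x)-1}$, the function $\bar u$ is a supersolution of (\ref{4}) in $\Omega$ as soon as $A^{p(x)-1-\gamma(x)} \geq \lambda\,\zeta^{\gamma(x)}$ pointwise. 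Using that $\gamma(x) < 0$ and $\zeta \geq c_0$ on $\overline{\Omega}$ bounds $\zeta^{\gamma(x)}$ by a constant $K$, while the exponent satisfies $p(x)-1-\gamma(x) \geq p^- - 1 - \gamma^+ > 0$; thus the choice $A = (K\lambda)^{1/(p^- - 1 - \gamma^+)}$ with $\lambda$ large enough to ensure $A \geq 1$ makes the required pointwise inequality hold uniformly in $\Omega$.

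Since $\bar u \geq A c_0 > 0 = u$ on $\partial\Omega$, the weak comparison principle, applied with test function $\phi = (u - \bar u)^+ \in W_0^{1,p(x)}(\Omega)$ (admissible because $\phi$ vanishes near $\partial\Omega$), combined with the strict monotonicity of the $p(x)$-Laplacian and with the fact that $s \mapsto \lambda s^{\gamma(x)}$ is decreasing, forces $\phi \equiv 0$ and hence $u \leq \bar u$ in $\Omega$. Therefore $\|u\|_\infty \leq A C_0 \leq C \lambda^{1/(p^- - 1 - \gamma^+)} \leq C\lambda^{1/(p^- - 1)}$ for $\lambda \geq 1$, since $\gamma^+ < 0$ gives $p^- - 1 - \gamma^+ > p^- - 1$. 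I expect the most delicate step to be the rigorous application of the comparison principle in this singular context: the lower bound on $u$ just established keeps $u^{\gamma(x)}$ locally bounded in $\Omega$ and dominated by $d(x)^{\gamma^-}$ near $\partial\Omega$, which together with the admissibility of $(u - \bar u)^+$ as a test function allows the monotonicity argument to go through despite the non-Lipschitz character of the nonlinearity at the boundary.
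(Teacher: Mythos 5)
Your lower bound is exactly the paper's: you quote Zhang's comparison $u\geq w$ from \cite{QZ} and read $\min\{\delta,d(x)\}$ off (\ref{w}); that part is fine.

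The upper bound, however, rests on the identity $-\Delta_{p(x)}(A\zeta)=A^{p(x)-1}$, and this is false for non-constant $p$: the $p(x)$-Laplacian is not homogeneous (the paper stresses precisely this in the introduction). Indeed,
\begin{equation*}
-\Delta_{p(x)}(A\zeta)=-\mathrm{div}\bigl(A^{p(x)-1}|\nabla \zeta|^{p(x)-2}\nabla \zeta\bigr)
=A^{p(x)-1}\bigl(-\Delta_{p(x)}\zeta\bigr)-A^{p(x)-1}\ln A\,|\nabla \zeta|^{p(x)-2}\,\nabla p\cdot\nabla \zeta ,
\end{equation*}
so with $-\Delta_{p(x)}\zeta=1$ you get $A^{p(x)-1}\bigl(1-\ln A\,|\nabla \zeta|^{p(x)-2}\nabla p\cdot\nabla \zeta\bigr)$, not $A^{p(x)-1}$. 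Since your choice forces $A\sim\lambda^{1/(p^--1-\gamma^+)}\to\infty$, the extra term of size $A^{p(x)-1}\ln A$ eventually dominates wherever $\nabla p\cdot\nabla\zeta>0$ and $|\nabla\zeta|\neq 0$, and there $-\Delta_{p(x)}(A\zeta)$ can even become negative; hence $A\zeta$ is in general not a supersolution of (\ref{4}), and no constant $A$ of the form $C\lambda^{\kappa}$ with $C$ independent of $\lambda$ repairs this. Your comparison-principle step is fine in itself (this scaling argument is the classical one for constant $p$), but here it has nothing to compare with. A variant that avoids scaling, e.g.\ taking $\bar u$ as the solution of $-\Delta_{p(x)}\bar u=\Lambda$ for a suitable constant $\Lambda(\lambda)$, merely shifts the difficulty: one then needs $\|\bar u\|_\infty\leq C\Lambda^{1/(p^--1)}$ with $C$ independent of $\Lambda$, which is exactly the estimate in question. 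The paper proceeds differently: it tests (\ref{4}) with $(u-k)^{+}$ on the level sets $A_k=\{u>k\}$, $k\geq 1$, uses $\gamma<0$ together with the lower bound on $u$ to control the singular right-hand side there, and then runs the Ladyzhenskaya--Ural'tseva type iteration as in \cite[Lemma 2.1]{Fan1} with $M=\lambda$, which yields $u\leq C\lambda^{1/(p^--1)}$ with $C$ independent of $\lambda$. You would need to replace your scaling step by an argument of this kind.
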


\begin{proof}
By using the fact that $w$ is a subsolution of (\ref{4}), Zhang in \cite{QZ}
showed that
\begin{equation*}
w(x)\leq u(x)\quad \text{for a.e. }x\in \Omega
\end{equation*}%
provided $\lambda $ large enough. Thus, it remains to prove that the second
inequality in (\ref{EQU1}) holds in $\Omega $. To this end, let a constant $%
k\geq 1$ and set $A_{k}=\{x\in \Omega :u(x)>k\}$. Taking $(u-k)^{+}$ as a
test function in (\ref{4}), we get
\begin{equation*}
\begin{array}{l}
\int_{A_{k}}\left\vert \nabla u\right\vert ^{p(x)}\text{ }dx=\lambda
\int_{A_{k}}w^{\gamma (x)}(u-k)\text{ }dx\leq \lambda \int_{A_{k}}(u-k)\text{
}dx.%
\end{array}%
\end{equation*}%
Then, following the quite similar argument as in \cite[Proof of Lemma 2.1]%
{Fan1} with $M=\lambda $ large, we obtain%
\begin{equation*}
\begin{array}{l}
u(x)\leq C\lambda ^{\frac{1}{p^{-}-1}}\text{ \ in }\Omega ,%
\end{array}%
\end{equation*}%
with a constant $C>0$ independent of $\lambda $, ending the proof of the
Lemma.
\end{proof}

The next result provides regularity of solutions for singular problems with
variable exponents. The constant case was proved by Hai in \cite{Hai} using
a different approach.

\begin{lemma}
\label{L1} Let $h:\Omega \rightarrow \mathbb{R}$ be a mensurable function
with
\begin{equation}
|h(x)|\leq Cd(x)^{-\gamma (x)}\quad \mbox{for}\quad x\in \Omega  \label{5}
\end{equation}%
where $\Omega \subset \mathbb{R}^{N}$ is a smooth bounded domain and $\gamma
:\overline{\Omega }\rightarrow \mathbb{R}$ is a continuous function such
that
\begin{equation}
\lim_{d(x)\rightarrow 0}N\gamma (x)=L\in (0,1).  \label{*}
\end{equation}%
If $u\in W_{0}^{1,p(x)}(\Omega )$ is a solution of the problem
\begin{equation}
\left\{
\begin{array}{ll}
-\Delta _{p(x)}u=h(x) & \text{in }\Omega \\
u=0 & \text{on }\partial \Omega ,%
\end{array}%
\right.  \label{**}
\end{equation}%
then there is a positive constant $M_{1},$ independent of $u,$ such that $%
|u|_{\infty }\leq M_{1}$. Moreover, $u\in C^{1,\alpha }(\overline{\Omega })$
and $\Vert u\Vert _{C^{1,\alpha }(\overline{\Omega })}\leq M_{1}$ with $%
\alpha \in (0,1),$ for some constant $M_{1}>0$ independent of $u$.
\end{lemma}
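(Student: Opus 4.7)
The plan is to first convert the pointwise bound (\ref{5}) together with the limiting condition (\ref{*}) into an integrability statement for $h$. Since $L<1$, I would pick $\epsilon>0$ small enough that $L+\epsilon<1$ and choose $s\in(N,N/(L+\epsilon))$. By (\ref{*}), there exists $\delta_0>0$ with $N\gamma(x)\le L+\epsilon$ whenever $d(x)<\delta_0$, hence $s\gamma(x)\le s(L+\epsilon)/N<1$ on the tubular neighborhood $\Omega_{\delta_0}=\{x\in\Omega:d(x)<\delta_0\}$. The classical integrability of $d(x)^{-\sigma}$ for $\sigma$ bounded above by a constant strictly less than $1$ then gives $\int_{\Omega_{\delta_0}}d(x)^{-s\gamma(x)}\,dx<\infty$, while away from $\partial\Omega$ the function $h$ is bounded. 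Altogether (\ref{5}) yields $h\in L^{s}(\Omega)$ for some fixed $s>N$, with norm depending only on $C$, $\gamma$, and $\Omega$, and in particular independent of $u$.

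Next, with $h\in L^s(\Omega)$ for $s>N\ge N/p^{-}$, I would obtain the pointwise bound $|u|_\infty\le M_{1}$ by a Moser/De~Giorgi iteration in the variable exponent setting, following the approach of \cite[Lemma~2.1]{Fan1} (which is exactly what is invoked at the end of the proof of Lemma~\ref{L6}). Concretely: test (\ref{**}) against $(u-k)^{+}$, use the embedding $W_{0}^{1,p(x)}(\Omega)\hookrightarrow L^{p^{*}(x)}(\Omega)$ to estimate $\int_{A_{k}}(u-k)\,dx$ in terms of $\int_{A_{k}}|\nabla u|^{p(x)}\,dx$, and iterate a decay inequality of the form $|A_{k+h}|\le C h^{-\mu}|A_{k}|^{1+\eta}$ for suitable $\mu,\eta>0$; the hypothesis $s>N$ is exactly what makes this iteration close up.

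Once $u\in L^{\infty}(\Omega)$ and $h\in L^{s}(\Omega)$ with $s>N$ are both in hand, the remaining step is to invoke a global $C^{1,\alpha}$ regularity theorem up to the boundary for the $p(x)$-Laplacian with right-hand side in $L^{s}$. This is the variable exponent analogue of the DiBenedetto--Lieberman estimates used by Hai \cite{Hai} in the constant exponent case, and it yields $u\in C^{1,\alpha}(\overline{\Omega})$ together with an estimate $\Vert u\Vert_{C^{1,\alpha}(\overline{\Omega})}\le M_{1}$, the constant depending only on $|u|_\infty$, $\Vert h\Vert_{L^{s}}$, $p(\cdot)$ and $\Omega$, all of which are under uniform control.

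The main obstacle is the last step: producing or quoting a global, up-to-the-boundary $C^{1,\alpha}$ estimate for the $p(x)$-Laplacian whose forcing term is only $L^{s}$-integrable (not $L^{\infty}$). The extraction of $h\in L^{s}$ with $s>N$ in the first step is tuned precisely so that such a regularity theorem applies and, simultaneously, so that the Moser iteration of the second step delivers the required $L^{\infty}$ bound; reconciling these two quantitative requirements with a single choice of $s$ is what makes the lemma non-routine in the variable exponent framework.
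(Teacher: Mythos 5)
Your first two steps track the paper closely: the paper likewise fixes $\epsilon>0$ with $L+\epsilon\in(0,1)$, uses the integrability of $d(x)^{-r}$ for $r<1$ to get an integrability property of $h$ (the paper settles for $h\in L^{N}(\Omega)$, while your choice $s\in(N,N/(L+\epsilon))$ giving $h\in L^{s}$ with $s>N$ is also correct and in fact sharper), and then obtains the uniform $L^{\infty}$ bound by testing (\ref{**}) with $(u-k)^{+}$ on the level sets $A_{k}$ and running a De Giorgi-type truncation argument; the paper closes this step with \cite[Lemma 5.1, Chapter 2]{LU} applied to the inequality $\int_{A_k}(u-k)\,dx\leq C|A_k|^{1+1/N}$, and applies the same argument to $-u$, which is essentially the iteration you describe via \cite{Fan1}.

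The genuine gap is your third step. You propose to finish by quoting a global, up-to-the-boundary $C^{1,\alpha}$ theorem for $-\Delta_{p(x)}u=h$ with $h$ merely in $L^{s}$, $s>N$, and you yourself flag that producing such a theorem is the main obstacle; no such result is cited, and the standard variable exponent reference used in this paper, \cite[Theorem 1.2]{Fan2}, is formulated for equations in divergence form whose inhomogeneity satisfies boundedness/growth conditions that an $L^{s}$ forcing term blowing up like $d(x)^{-\gamma(x)}$ does not meet. The paper's way around this is precisely the idea missing from your plan: introduce the auxiliary \emph{linear} problem $-\Delta\psi=h$ in $\Omega$, $\psi=0$ on $\partial\Omega$, whose solution is $C^{1,\alpha}(\overline{\Omega})$ (note that this is exactly where your improvement $h\in L^{s}$, $s>N$, is what one really needs, via $\psi\in W^{2,s}\hookrightarrow C^{1,\alpha}$; the paper's stated $L^{N}$ integrability is borderline for this), subtract it to rewrite the equation as the homogeneous divergence-form identity $-\mathrm{div}\bigl(|\nabla u|^{p(x)-2}\nabla u-\nabla\psi\bigr)=0$, and only then apply \cite[Theorem 1.2]{Fan2}, whose hypotheses are met because $\nabla\psi$ is H\"older continuous. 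Without this reduction (or an explicit proof of the regularity statement you invoke), your argument does not reach the $C^{1,\alpha}(\overline{\Omega})$ conclusion with a bound independent of $u$.
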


\begin{proof}
First, recall from \cite{LM} that for all $r\in \lbrack 0,1)$ we have
\begin{equation*}
\int_{\Omega }\frac{1}{d(x)^{r}}dx<\infty .
\end{equation*}%
Fixing $\epsilon >0$ such that $L+\epsilon \in (0,1)$, from (\ref{*}), we
derive
\begin{equation*}
\begin{array}{l}
\int_{\Omega }|h|^{N}dx<C\int_{\Omega }\frac{1}{d(x)^{N\gamma (x)}}dx\leq
C_{1}+C\int_{\Omega }\frac{1}{d(x)^{L+\epsilon }}dx<\infty ,%
\end{array}%
\end{equation*}%
for some constant $C_{1}>0$, showing that $h\in L^{N}(\Omega )$.

For each $k\in \mathbb{N}$, set
\begin{equation*}
A_{k}=\{x\in \Omega \,:\,u(x)>k\}.
\end{equation*}%
Since $u\in L^{1}(\Omega )$, we have that
\begin{equation*}
|A_{k}|\rightarrow 0\quad \mbox{as}\quad k\rightarrow +\infty .
\end{equation*}%
Once $h\in L^{N}(\Omega )$, it follows that%
\begin{equation*}
\int_{A_{k}}|h|^{N}dx\rightarrow 0\quad \mbox{as}\quad k\rightarrow +\infty
\end{equation*}%
or equivalently
\begin{equation}
|h|_{L^{N}(A_{k})}\rightarrow 0\quad \mbox{as}\quad k\rightarrow +\infty .
\label{E1}
\end{equation}%
Using $(u-k)^{+}$ as a test function in (\ref{**}), we get
\begin{equation*}
\int_{A_{k}}|\nabla u|^{p(x)}dx=\int_{A_{k}}h(u-k)^{+}dx\leq
|h|_{L^{N}(A_{k})}|(u-k)^{+}|_{L^{\frac{N}{N-1}}(\Omega )}
\end{equation*}%
Since $(u-k)^{+}\in W^{1,1}(\Omega )$, the Sobolev embedding leads to
\begin{equation*}
\int_{A_{k}}|\nabla u|^{p(x)}dx=\int_{A_{k}}h(u-k)^{+}dx\leq
C_{1}|h|_{L^{N}(A_{k})}\int_{A_{k}}|\nabla u|dx.
\end{equation*}%
From the estimate below
\begin{equation}
\int_{A_{k}}|\nabla u|dx\leq \int_{A_{k}}|\nabla u|^{p(x)}dx+|A_{k}|
\label{E2}
\end{equation}%
we derive that
\begin{equation*}
\int_{A_{k}}|\nabla u|^{p(x)}dx=\int_{A_{k}}h(u-k)^{+}dx\leq
C_{1}|h|_{L^{N}(A_{k})}\int {A_{k}}|\nabla
u|^{p(x)}dx+C_{1}|h|_{L^{N}(A_{k})}|A_{k}|.
\end{equation*}%
Thereby, for $k$ large enough the limit (\ref{E1}) gives
\begin{equation*}
\int_{A_{k}}|\nabla u|^{p(x)}dx=\int_{A_{k}}h(u-k)^{+}dx\leq C_{2}|A_{k}|.
\end{equation*}%
The last inequality together with (\ref{E2}) leads to
\begin{equation*}
\int_{A_{k}}|\nabla u|dx\leq C_{4}|A_{k}|.
\end{equation*}%
On the other hand, we know that
\begin{equation*}
\int_{A_{k}}(u-k)dx\leq |A_{k}|^{\frac{1}{N}}|(u-k)|_{L^{\frac{N}{N-1}%
}(A_{k})}\leq C_{3}|A_{k}|^{\frac{1}{N}}\int_{A_{k}}|\nabla u|dx,
\end{equation*}%
and so,
\begin{equation*}
\int_{A_{k}}(u-k)dx\leq C_{5}|A_{k}|^{1+\frac{1}{N}}.
\end{equation*}%
Then, owing to \cite[Lemma 5.1, Chaper 2]{LU} we conclude that there is $%
k_{1}>0,$ independent of $u,$ such that
\begin{equation}
u(x)\leq k_{1}\quad \mbox{a.e in}\quad \Omega .  \label{E3}
\end{equation}%
Now, observe that the function $-u$ verifies the problem
\begin{equation*}
\left\{
\begin{array}{ll}
-\Delta _{p(x)}(-u)=-h(x) & \text{in }\Omega \\
u=0 & \text{on }\partial \Omega .%
\end{array}%
\right.
\end{equation*}%
Then, repeating the same argument as above we get $k_{2}>0,$ independent of $%
u,$ such that
\begin{equation}
-u(x)\leq k_{2}\quad \mbox{a.e in}\quad \Omega .  \label{E4}
\end{equation}

From (\ref{E3}), (\ref{E4}), there is $M>0$ independent of $u$ such that
\begin{equation*}
|u(x)|\leq M\quad \mbox{a.e in}\quad \Omega ,
\end{equation*}%
from where it follows that $u\in L^{\infty }(\Omega )$ with
\begin{equation*}
|u|_{\infty }\leq M.
\end{equation*}%
Now, if $\psi \in C^{1,\alpha }(\overline{\Omega }),$ for certain $\alpha
\in (0,1),$ is a solution of the
\begin{equation*}
\left\{
\begin{array}{ll}
-\Delta \psi=h(x) & \text{in }\Omega \\
\psi=0 & \text{on }\partial \Omega ,%
\end{array}%
\right.
\end{equation*}%
we get
\begin{equation*}
-div(|\nabla u|^{p(x)-2}\nabla u-\nabla \psi)=0.
\end{equation*}%
Hence, the $C^{1,\alpha }$-boundedness of $u$ follows from \cite[Theorem 1.2]%
{Fan2}. This completes the proof.
\end{proof}

\begin{lemma}
\label{L2}Let $\varepsilon >0$ and $h,\tilde{h}\in L_{loc}^{\infty }(\Omega
) $ satisfy (\ref{5}) with $h\geq 0$, $h\neq 0$. Let $u,u_{\varepsilon }\in
W_{0}^{1,p(x)}(\Omega )$ be the solutions of problems
\begin{equation}
\left\{
\begin{array}{ll}
-\Delta _{p(x)}u=h(x) & \text{ in }\Omega , \\
u=0 & \text{ on }\partial \Omega ,%
\end{array}%
\right.  \label{12}
\end{equation}%
and
\begin{equation}
-\Delta _{p(x)}u_{\varepsilon }=\left\{
\begin{array}{ll}
h(x) & \text{ if \ }d(x)>\varepsilon \\
\tilde{h}(x) & \text{ if \ }d(x)<\varepsilon%
\end{array}%
\right. ,\text{ }u_{\varepsilon }=0\text{ \ on }\partial \Omega .  \label{13}
\end{equation}%
Then, for $\varepsilon $ small enough, it holds $u_{\varepsilon }\geq \frac{u%
}{2}$ in $\Omega $.
\end{lemma}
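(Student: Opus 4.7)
The plan is to show that the perturbed solutions $u_\varepsilon$ converge to $u$ in $C^1(\overline{\Omega})$ as $\varepsilon\to 0^+$, and then combine this with a Hopf-type lower bound $u\geq c_0 d$ to conclude $u_\varepsilon\geq u/2$ for $\varepsilon$ small.

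First I would establish a uniform $C^{1,\alpha}$ bound. Let $h_\varepsilon$ denote the right-hand side of (\ref{13}), equal to $h$ on $\{d(x)>\varepsilon\}$ and to $\tilde h$ on $\{d(x)<\varepsilon\}$. Since both $h$ and $\tilde h$ satisfy (\ref{5}), so does $h_\varepsilon$, with a constant independent of $\varepsilon$. Hence Lemma~\ref{L1} yields $u_\varepsilon\in C^{1,\alpha}(\overline{\Omega})$ with $\|u_\varepsilon\|_{C^{1,\alpha}}\leq M_1$ uniformly in $\varepsilon$, and the same for $u$. By Arzel\`a--Ascoli, any sequence $\varepsilon_n\to 0^+$ admits a subsequence along which $u_{\varepsilon_n}\to \bar u$ in $C^1(\overline{\Omega})$.

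Next I would identify $\bar u=u$. For any $\varphi\in C_c^\infty(\Omega)$, once $\varepsilon$ is smaller than $\operatorname{dist}(\operatorname{supp}\varphi,\partial\Omega)$ one has $h_\varepsilon=h$ on $\operatorname{supp}\varphi$, so the weak formulation of (\ref{13}) gives
\[
\int_\Omega |\nabla u_{\varepsilon_n}|^{p(x)-2}\nabla u_{\varepsilon_n}\cdot\nabla\varphi\,dx=\int_\Omega h\,\varphi\,dx.
\]
Letting $n\to\infty$ and using the $C^1$ convergence of $u_{\varepsilon_n}$, I pass the left-hand side to the limit and see that $\bar u$ is a weak solution of (\ref{12}). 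Strict monotonicity of $-\Delta_{p(x)}$ then gives $\bar u=u$, and since every subsequence has the same limit, the whole family $u_\varepsilon\to u$ in $C^1(\overline{\Omega})$.

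Finally I would invoke positivity at the boundary. Because $h\geq 0$ and $h\not\equiv 0$, the strong maximum principle and Hopf lemma for $-\Delta_{p(x)}$ (available in the $C^{1,\alpha}$ framework used in Fan's regularity theorem cited in Lemma~\ref{L1}) give $u>0$ in $\Omega$ and $\partial u/\partial\nu<0$ on $\partial\Omega$, hence $u(x)\geq c_0\, d(x)$ throughout $\Omega$ for some $c_0>0$. Since $u-u_\varepsilon\in C^1(\overline{\Omega})$ vanishes on $\partial\Omega$, integrating its gradient along the inward normal in a tubular neighbourhood of the $C^2$ boundary yields $|u-u_\varepsilon|(x)\leq \|\nabla(u-u_\varepsilon)\|_\infty\, d(x)$ near $\partial\Omega$, while on a set $\{d\geq\delta\}$ one uses $u\geq c_0\delta$ together with $\|u-u_\varepsilon\|_\infty\to 0$. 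Combined with $\|u-u_\varepsilon\|_{C^1}\to 0$ these two estimates force $|u-u_\varepsilon|\leq u/2$ in $\Omega$ for all sufficiently small $\varepsilon$, which is the desired inequality. The \emph{main obstacle} is precisely the Hopf-type bound $u\geq c_0 d$ for the $p(x)$-Laplacian with a merely pointwise singular right-hand side satisfying (\ref{5}); the uniform $C^{1,\alpha}$ estimate from Lemma~\ref{L1} is the technical pillar that legitimizes both the compactness argument and this boundary comparison.
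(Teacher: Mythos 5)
Your argument is correct and its skeleton coincides with the paper's proof of Lemma \ref{L2}: uniform $C^{1,\alpha}(\overline{\Omega})$ bounds for $u$ and $u_{\varepsilon}$ from Lemma \ref{L1} (valid because the right-hand side of (\ref{13}) satisfies (\ref{5}) with a constant independent of $\varepsilon$), the lower bound $u\geq c\,d$ from the strong maximum principle for the $p(x)$-Laplacian, convergence $u_{\varepsilon}\to u$ in $C^{1}(\overline{\Omega})$, and the final two-region comparison $|u-u_{\varepsilon}|\leq \frac{c}{2}d\leq \frac{u}{2}$ near $\partial\Omega$, $|u-u_{\varepsilon}|\leq \frac{u}{2}$ on $\{d\geq\delta\}$. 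Where you genuinely diverge is in how the $C^{1}$-convergence is produced. The paper subtracts (\ref{12}) from (\ref{13}), tests with $u-u_{\varepsilon}$, and uses the standard algebraic monotonicity inequalities for $|\xi|^{p(x)-2}\xi$ (split over $\{p(x)<2\}$ and $\{p(x)\geq 2\}$) to get $\|\nabla(u-u_{\varepsilon})\|_{L^{p(x)}}\to 0$ directly, the smallness coming from $\int_{\{d<\varepsilon\}}|h-\tilde h|\,dx\to 0$; it then upgrades to $C^{1}$ via the uniform $C^{1,\delta}$ bound and compact embedding. You instead use Arzel\`a--Ascoli plus identification of the limit by testing with $\varphi\in C_{c}^{\infty}(\Omega)$ (where $h_{\varepsilon}=h$ for small $\varepsilon$) and strict monotonicity, followed by the subsequence-uniqueness principle. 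Both routes work; the paper's gives a quantitative $W^{1,p(x)}_{0}$ estimate tied explicitly to the perturbation on the strip $\{d<\varepsilon\}$, while yours avoids the algebraic inequalities at the cost of the limit-identification step, where you should make explicit that $\bar u\in W_{0}^{1,p(x)}(\Omega)$ (it is a $C^{1}$ limit vanishing on $\partial\Omega$) and that the weak formulation extends from $C_{c}^{\infty}(\Omega)$ to all of $W_{0}^{1,p(x)}(\Omega)$ by density, using $h\in L^{N}(\Omega)$ (or the Hardy--Sobolev estimate) to make $\varphi\mapsto\int_{\Omega}h\varphi\,dx$ continuous; only then does strict monotonicity force $\bar u=u$. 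Also note that the bound $u\geq c\,d$ you single out as the main obstacle is exactly the paper's (\ref{26}), quoted from the strong maximum principle of \cite{FZZ}, so no new Hopf-type lemma needs to be proved.
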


\begin{proof}
By Lemma \ref{L1} there exist $R>0$ and $\delta \in (0,1)$ such that
\begin{equation}
u,u_{\varepsilon }\in C^{1,\delta }(\overline{\Omega })\text{ \ and \ }%
\left\Vert u\right\Vert _{C^{1,\delta }(\overline{\Omega })},\left\Vert
u_{\varepsilon }\right\Vert _{C^{1,\delta }(\overline{\Omega })}\leq R.
\label{11}
\end{equation}%
Since $h\geq 0,$ the strong maximum principle (see \cite{FZZ}) implies
\begin{equation}
u(x)\geq cd(x)\text{ \ in }\Omega ,  \label{26}
\end{equation}%
for some constant $c>0$. Subtracting (\ref{12}) from (\ref{13}), multiplying
by $u-u_{\varepsilon }$ and integrating over $\Omega $ we obtain
\begin{equation*}
\begin{array}{l}
\int_{\Omega }(\left\vert \nabla u\right\vert ^{p(x)-2}\nabla u-\left\vert
\nabla u_{\varepsilon }\right\vert ^{p(x)-2}\nabla u_{\varepsilon })\nabla
(u-u_{\varepsilon })\ dx\leq 2R\int_{\{d(x)<\varepsilon \}}|h-\tilde{h}|%
\text{ }dx.%
\end{array}%
\end{equation*}%
Now let $\mathcal{K}_{1}=\{x\in \Omega :p(x)<2\}$ and $\mathcal{K}%
_{2}=\{x\in \Omega :p(x)\geq 2\}$. Then, using the algebraic inequalities
\begin{equation*}
\begin{array}{l}
|y_{1}-y_{2}|^{r}\leq \frac{1}{\gamma -1}%
[(|y_{1}|^{r-2}y_{1}-|y_{2}|^{r-2}y_{2})(y_{1}-y_{2})](|y_{1}|^{r}+|y_{2}|^{r})^{(2-r)/r}%
\text{,}%
\end{array}%
\end{equation*}
if $1<r<2$ and
\begin{equation*}
\begin{array}{l}
|y_{1}-y_{2}|^{r}\leq
2^{r}(|y_{1}|^{r-2}y_{1}-|y_{2}|^{r-2}y_{2})(y_{1}-y_{2})\text{ \ if }r\geq
2,%
\end{array}%
\end{equation*}%
for $y_{1},y_{2}\in
\mathbb{R}
^{N},$ we obtain%
\begin{equation*}
\left\Vert \nabla (u-u_{\varepsilon })\right\Vert _{L^{p(x)}}\rightarrow 0\
\text{as}\ \varepsilon \rightarrow 0.
\end{equation*}%
Hence, by (\ref{11}) and the compact embedding $C^{1,\delta }(\overline{%
\Omega })\subset C^{1}(\overline{\Omega })$, we get $u\rightarrow
u_{\varepsilon }$ in $C^{1}(\overline{\Omega })$\ as \ $\varepsilon
\rightarrow 0.$ Consequently, from (\ref{26}) and for $\varepsilon $ small
enough, we have
\begin{equation*}
u-u_{\varepsilon }\leq \frac{c}{2}d\leq \frac{u}{2}\text{ \ in }\Omega ,
\end{equation*}%
which implies
\begin{equation*}
u_{\varepsilon }\geq u-\frac{u}{2}=\frac{u}{2}\ \ \text{in }\Omega .
\end{equation*}%
The proof is completed.
\end{proof}

\section{Sub-supersolution Theorems}

\label{sec3}

Let us introduce the quasilinear system
\begin{equation}
\left\{
\begin{array}{ll}
-\Delta _{p(x)}u=f(x,u,v) & \text{in }\Omega , \\
-\Delta _{q(x)}v=g(x,u,v) & \text{in }\Omega , \\
u,v>0 & \text{in }\Omega , \\
u,v=0 & \text{on }\partial \Omega ,%
\end{array}%
\right.  \tag{$P_{f,g}$}  \label{p*}
\end{equation}%
where $\Omega $ is a bounded domain in $%
\mathbb{R}
^{N}$ $\left( N\geq 2\right) $ with smooth boundary and $f,g:\Omega \times
(0,+\infty )\times (0,+\infty )\rightarrow
\mathbb{R}
$ are Carath\'{e}odory functions which can exhibit singularities when the
variables $u$ and $v$ approach zero. More precisely, for every $%
(s_{1},s_{2})\in
\mathbb{R}
_{+}^{\ast }\times
\mathbb{R}
_{+}^{\ast }$ and for almost every $x\in \Omega $, we assume that $f(\cdot
,s_{1},s_{2})$ and $g(\cdot ,s_{1},s_{2})$ are Lebesgue measurable in $%
\Omega $ and $f(x,\cdot ,\cdot )$ and $g(x,\cdot ,\cdot )$ are in $C(%
\mathbb{R}
_{+}^{\ast }\times
\mathbb{R}
_{+}^{\ast })$.

In what follows, we divide our study into two classes of systems, namely
cooperative system and competitive system.

\subsection{Cooperative System}

\label{subsec1}

The system (\ref{p*}) is called cooperative if for $u $ (resp. $v$) fixed
the nonlinearity $f$ (resp. $g$) is increasing in $v$ (resp. $u$).

We recall that a sub-supersolution for (\ref{p*}) is any pair $(\underline{u}%
,\underline{v})$, $(\overline{u},\overline{v})\in (W_{0}^{1,p(x)}(\Omega
)\cap L^{\infty }(\Omega ))\times (W_{0}^{1,q(x)}(\Omega )\cap L^{\infty
}(\Omega ))$ for which there hold $(\overline{u},\overline{v})\geq (%
\underline{u},\underline{v})$ in $\Omega $,
\begin{equation*}
\begin{array}{l}
\int_{\Omega }\left\vert \nabla \underline{u}\right\vert ^{p(x)-2}\nabla
\underline{u}\nabla \varphi \ dx-\int_{\Omega }f(x,\underline{u},\omega
_{2})\varphi \ dx\leq 0, \\
\int_{\Omega }\left\vert \nabla \underline{v}\right\vert ^{q(x)-2}\nabla
\underline{v}\nabla \psi \ dx-\int_{\Omega }g(x,\omega _{1},\underline{v}%
)\psi \ dx\leq 0,%
\end{array}%
\end{equation*}%
\begin{equation*}
\begin{array}{l}
\int_{\Omega }\left\vert \nabla \overline{u}\right\vert ^{p(x)-2}\nabla
\overline{u}\nabla \varphi \ dx-\int_{\Omega }f(x,\overline{u},\omega
_{2})\varphi \ dx\leq 0, \\
\int_{\Omega }\left\vert \nabla \overline{v}\right\vert ^{q(x)-2}\nabla
\overline{v}\nabla \psi -\int_{\Omega }g(x,\omega _{1},\overline{v})\psi \
dx\geq 0,%
\end{array}%
\end{equation*}%
for all $\left( \varphi ,\psi \right) \in W_{0}^{1,p(x)}\left( \Omega
\right) \times W_{0}^{1,q(x)}\left( \Omega \right) $ with $\varphi ,\psi
\geq 0$ a.e. in $\Omega $ and for all $\left( \omega _{1},\omega _{2}\right)
\in W_{0}^{1,p(x)}\left( \Omega \right) \times W_{0}^{1,q(x)}\left( \Omega
\right) $ satisfying $\omega _{1}\in \lbrack \underline{u},\overline{u}]$
and $\omega _{2}\in \lbrack \underline{v},\overline{v}]$ a.e. in $\Omega $ .

The main goal in this subsection is to prove Theorem \ref{T2} below, which
is a key point in the proof of Theorem \ref{T1}.

\begin{theorem}
\label{T2} Assume that system (\ref{p*}) is cooperative and let $\left(
\underline{u},\underline{v}\right) ,$ $\left( \overline{u},\overline{v}%
\right) \in C^{1}(\overline{\Omega })\times C^{1}(\overline{\Omega })$ be a
sub and supersolution pairs of (\ref{p*}). Suppose there exist constants $%
k_{1},k_{2}>0$ and $\alpha (x),\beta (x),$ with
\begin{equation}
\begin{array}{l}
-1\leq \alpha ^{-}\leq \alpha ^{+}<0,\text{ \ \ }-1\leq \beta ^{-}\leq \beta
^{+}<0%
\end{array}
\label{h6}
\end{equation}%
and%
\begin{equation}
\lim_{d(x)\rightarrow 0}N\alpha (x)=L_{1}\in (-1,0),\text{ \ \ \ }%
\lim_{d(x)\rightarrow 0}N\beta (x)=L_{2}\in (-1,0),  \label{h7}
\end{equation}%
such that
\begin{equation}
\begin{array}{c}
\left\vert f(x,u,v)\right\vert \leq k_{1}d(x)^{\alpha (x)}\text{ and }%
\left\vert g(x,u,v)\right\vert \leq k_{2}d(x)^{\beta (x)}\text{ in }\Omega
\times \lbrack \underline{u},\overline{u}]\times \lbrack \underline{v},%
\overline{v}].%
\end{array}
\label{h5}
\end{equation}%
Then, system (\ref{p*}) has a positive solution $(u,v)$ in $C^{1,\nu }(%
\overline{\Omega })\times C^{1,\nu }(\overline{\Omega })$ for certain $\nu
\in (0,1).$
\end{theorem}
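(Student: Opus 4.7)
The plan is to apply Schauder's fixed point theorem to a truncated decoupled version of $(P_{f,g})$ and then to verify that any resulting fixed point lies in the order interval $[\underline{u},\overline{u}]\times[\underline{v},\overline{v}]$, so that it automatically solves the original coupled system.

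For every $(w_{1},w_{2})\in C(\overline{\Omega})\times C(\overline{\Omega})$ I would introduce the pointwise truncations
\[
\mathcal{T}_{1}(w_{1})(x)=\max\{\underline{u}(x),\min\{w_{1}(x),\overline{u}(x)\}\},\qquad \mathcal{T}_{2}(w_{2})(x)=\max\{\underline{v}(x),\min\{w_{2}(x),\overline{v}(x)\}\},
\]
and let $S(w_{1},w_{2})=(u,v)$, where $(u,v)\in W_{0}^{1,p(x)}(\Omega)\times W_{0}^{1,q(x)}(\Omega)$ solves the decoupled system
\[
-\Delta_{p(x)}u=f(x,\mathcal{T}_{1}(w_{1}),\mathcal{T}_{2}(w_{2})),\qquad -\Delta_{q(x)}v=g(x,\mathcal{T}_{1}(w_{1}),\mathcal{T}_{2}(w_{2})),
\]
with zero boundary data. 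Hypothesis (\ref{h5}) bounds the right-hand sides pointwise by $k_{1}d(x)^{\alpha(x)}$ and $k_{2}d(x)^{\beta(x)}$, while (\ref{h7}) places $-\alpha,-\beta$ in the range required by Lemma~\ref{L1}. Hence $S$ is well-defined, and Lemma~\ref{L1} produces a uniform estimate $\Vert u\Vert_{C^{1,\nu}(\overline{\Omega})}+\Vert v\Vert_{C^{1,\nu}(\overline{\Omega})}\leq M$ independent of $(w_{1},w_{2})$, for some $\nu\in(0,1)$ and some $M>0$.

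Letting $B$ be the closed ball of radius $M$ in $C^{1}(\overline{\Omega})\times C^{1}(\overline{\Omega})$, the compact embedding $C^{1,\nu}(\overline{\Omega})\hookrightarrow C^{1}(\overline{\Omega})$ makes $S(B)$ relatively compact, so that continuity of $S$ together with Schauder's fixed point theorem yields a fixed point $(u,v)\in B$ of $S$. Continuity of $S$ is where the singular right-hand side bites: if $(w_{1}^{n},w_{2}^{n})\to(w_{1},w_{2})$ in $C^{1}\times C^{1}$, then the truncations converge uniformly, and since $|f|\leq k_{1}d^{\alpha(x)}\in L^{N}(\Omega)$ by the computation in the proof of Lemma~\ref{L1}, dominated convergence gives convergence of the right-hand sides in $L^{N}(\Omega)$; standard continuous dependence for $p(x)$-Laplacian equations combined with Lemma~\ref{L1} then upgrades this to $C^{1}$-convergence of $S(w_{1}^{n},w_{2}^{n})$.

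Finally, the decisive step is that any fixed point of $S$ must lie in $[\underline{u},\overline{u}]\times[\underline{v},\overline{v}]$. Testing the $u$-equation with $(\underline{u}-u)^{+}\in W_{0}^{1,p(x)}(\Omega)$, invoking the subsolution inequality for $\underline{u}$ with the admissible choice $\omega_{2}=\mathcal{T}_{2}(v)\in[\underline{v},\overline{v}]$, and using $\mathcal{T}_{1}(u)=\underline{u}$ on $\{u<\underline{u}\}$, subtraction leads to
\[
\int_{\{u<\underline{u}\}}\left(|\nabla\underline{u}|^{p(x)-2}\nabla\underline{u}-|\nabla u|^{p(x)-2}\nabla u\right)\cdot(\nabla\underline{u}-\nabla u)\,dx\leq 0.
\]
The vectorial monotonicity inequalities for $|y|^{r-2}y$ with $r=p(x)$, split into $p(x)<2$ and $p(x)\geq 2$ as in the proof of Lemma~\ref{L2}, force the integrand to vanish, hence $\nabla(\underline{u}-u)^{+}\equiv 0$ and $u\geq\underline{u}$. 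The three symmetric comparisons $u\leq\overline{u}$ and $\underline{v}\leq v\leq\overline{v}$ follow from the supersolution inequality and the analogous arguments for $v$. Once $(u,v)$ lies in the order interval, the truncations are inactive, so $(u,v)$ solves $(P_{f,g})$, and the $C^{1,\nu}(\overline{\Omega})$-regularity is a final application of Lemma~\ref{L1}. I expect the main obstacle to be the interplay between continuity of $S$ and the two-regime $p(x)$-monotonicity, since both must be carried out in the presence of right-hand sides blowing up at $\partial\Omega$, losing the homogeneity that makes the constant-exponent argument straightforward.
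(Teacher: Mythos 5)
Your proposal follows essentially the same route as the paper: truncate the arguments into the order interval $[\underline{u},\overline{u}]\times[\underline{v},\overline{v}]$, solve the decoupled auxiliary problem, use the bound \eqref{h5} with \eqref{h7} and Lemma \ref{L1} to get uniform $C^{1,\nu}$ estimates and hence compactness and continuity of the solution map, apply Schauder, and then show via the sub/supersolution inequalities (with the admissible choices of $\omega_1,\omega_2$) and the monotonicity of the $p(x)$-Laplacian that the fixed point lies in the order interval, so the truncation is inactive. The only cosmetic difference is that you run Schauder on a ball of $C^{1}(\overline{\Omega})\times C^{1}(\overline{\Omega})$ while the paper works on all of $C(\overline{\Omega})\times C(\overline{\Omega})$; the substance of the argument is the same.
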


\begin{proof}
For each $(z_{1},z_{2})\in C(\overline{\Omega })\times C(\overline{\Omega }
), $ let $(u,v)\in W_{0}^{1,p(x)}(\Omega )\times W_{0}^{1,q(x)}(\Omega )$ be
the unique solution of the problem
\begin{equation}
\left\{
\begin{array}{ll}
-\Delta _{p(x)}u=\widetilde{f}(x,z_{1},z_{2}) & \text{in }\Omega , \\
-\Delta _{q(x)}v=\widetilde{g}(x,z_{1},z_{2}) & \text{in }\Omega , \\
u,v>0 & \text{in }\Omega , \\
u,v=0 & \text{on }\partial \Omega ,%
\end{array}
\right.  \label{301}
\end{equation}
where
\begin{equation}
\widetilde{f}(x,z_{1},z_{2})=f(x,\widetilde{z}_{1},\widetilde{z}_{2})\text{
\ and \ }\widetilde{g}(x,z_{1},z_{2})=g(x,\widetilde{z}_{1},\widetilde{z}
_{2})  \label{320}
\end{equation}
with
\begin{equation}
\widetilde{z}_{1}=\min \left\{\max \left\{z_{1},\underline{u}\right\},%
\overline{u}\right\}\text{ and } \widetilde{z}_{2}=\min \left\{\max
\left\{z_{2},\underline{v}\right\},\overline{v}\right\}.  \label{300}
\end{equation}
Then $\underline{u}\leq \widetilde{z}_{1}\leq \overline{u}$ and $\underline{%
v }\leq \widetilde{z}_{2}\leq \overline{v}$ and by (\ref{h5}) we have
\begin{equation}
|\widetilde{f}(x,z_{1},z_{2})|\leq k_{1}d(x)^{\alpha (x)}\text{ \ and \ }
\left\vert \widetilde{g}(x,z_{1},z_{2})\right\vert \leq k_{2}d(x)^{\beta
(x)} \text{ for a.e. }x\in \Omega .  \label{302}
\end{equation}

Using the continuous embedding $W_{0}^{1,p(x)}(\Omega )\hookrightarrow
W_{0}^{1,p^{-}}(\Omega )$ together with (\ref{h6}), for each $\varphi \in
W_{0}^{1,p(x)}(\Omega )$ we have
\begin{equation*}
\begin{array}{l}
\int_{\Omega }|\varphi |d(x)^{\alpha (x)}\text{ }dx=\int_{\{d<1\}}|\varphi
|d(x)^{\alpha (x)}\text{ }dx+\int_{\{d\geq 1\}}|\varphi |d(x)^{\alpha (x)}%
\text{ }dx \\
\leq \int_{\{d<1\}}|\varphi |d(x)^{\alpha ^{+}}\text{ }dx+\int_{\{d\geq
1\}}|\varphi |\text{ }dx\leq C^{\prime }\left\Vert \varphi \right\Vert
_{W_{0}^{1,p^{-}}(\Omega )}<\infty ,%
\end{array}%
\end{equation*}%
for some constant $C^{\prime }>0$. Here, we used the Hardy-Sobolev
inequality which guarantees that $\varphi d(x)^{\alpha ^{+}}\in L^{r}(\Omega
)$ with $\frac{1}{r}=\frac{1}{p^{-}}-\frac{1+\alpha ^{+}}{N}$. In the same
manner, by using $W_{0}^{1,q(x)}(\Omega )\hookrightarrow
W_{0}^{1,q^{-}}(\Omega )$ and (\ref{h6}), for $\psi \in
W_{0}^{1,q(x)}(\Omega ),$ we can see that $\int_{\Omega }|\psi |d(x)^{\beta
(x)}$ $dx<\infty $. Hence, this ensures that
\begin{equation*}
\widetilde{f}(x,z_{1},z_{2})\in W^{-1,p^{\prime }(x)}(\Omega )\text{ and }%
\widetilde{g}(x,z_{1},z_{2})\in W^{-1,q^{\prime }(x)}(\Omega ),
\end{equation*}%
which in turns enable us to conclude, by Minty-Browder Theorem (see, e.g.,
\cite{B}),\ the uniqueness of the solution $(u,v)$ in (\ref{301}).

Let us introduce the operator
\begin{equation*}
\begin{array}{lll}
\mathcal{T}: & C(\overline{\Omega })\times C(\overline{\Omega }) &
\rightarrow C(\overline{\Omega })\times C(\overline{\Omega }) \\
& \text{ \ \ \ \ }(z_{1},z_{2}) & \mapsto \mathcal{T}(z_{1},z_{2})=(u,v).%
\end{array}%
\end{equation*}%
We will now prove, by applying Schauder's fixed point theorem, that $%
\mathcal{T}$ has a fixed point. Using (\ref{h7}) and Lemma \ref{L1}, there
exists $\nu \in (0,1)$ such that
\begin{equation}
(u,v)\in C^{1,\nu }(\overline{\Omega })\times C^{1,\nu }(\overline{\Omega })%
\text{ \ and \ }\left\Vert u\right\Vert _{C^{1,\nu }(\overline{\Omega }%
)},\left\Vert v\right\Vert _{C^{1,\nu }(\overline{\Omega })}\leq C,
\label{2}
\end{equation}%
where $C>0$ is independent of $u$ and $v$. Then the compactness of the
embedding $C^{1,\nu }(\overline{\Omega })\subset C(\overline{\Omega })$
implies that $\mathcal{T(}C(\overline{\Omega })\times C(\overline{\Omega }))$
is a relatively compact subset of $C(\overline{\Omega })\times C(\overline{%
\Omega })$.

Next, we show that $\mathcal{T}$ is continuous with respect to the topology
of $C(\overline{\Omega })\times C(\overline{\Omega })$. Let $%
(z_{1,n},z_{2,n})\rightarrow (z_{1},z_{2})$ in $C(\overline{\Omega })\times
C(\overline{\Omega })$ for all $n$. Denoting $\left( u_{n},v_{n}\right) =%
\mathcal{T(}z_{1,n},z_{2,n})$, we have from (\ref{2}) that $\left(
u_{n},v_{n}\right) \in C^{1,\nu }(\overline{\Omega })\times C^{1,\nu }(%
\overline{\Omega })$. By Ascoli-Arzel\`{a} theorem there holds $%
(u_{n},v_{n})\rightarrow (u,v)$ in $C(\overline{\Omega })\times C(\overline{%
\Omega })$. On the other hand, (\ref{h6}), (\ref{h5}) ensure that
\begin{equation*}
\widetilde{f}(x,z_{1,n},z_{2,n})\rightarrow \widetilde{f}(x,z_{1},z_{2})\in
W^{-1,p^{\prime }(x)}(\Omega )
\end{equation*}%
and
\begin{equation*}
\widetilde{g}(x,z_{1,n},z_{2,n})\rightarrow \widetilde{g}(x,z_{1},z_{2})\in
W^{-1,q^{\prime }(x)}(\Omega ).
\end{equation*}%
The above limits permit to conclude that $\mathcal{T}$ is continuous.

We are thus in a position to apply Schauder's fixed point theorem to the map
$\mathcal{T}$, which establishes the existence of $(u,v)\in C(\overline{
\Omega })\times C(\overline{\Omega })$ satisfying $(u,v)=\mathcal{T}(u,v).$

Let us justify that
\begin{equation*}
\underline{u}\leq u\leq \overline{u}\text{ and }\underline{v}\leq v\leq
\overline{v}\text{ in }\Omega .
\end{equation*}
Put $\zeta =(\underline{u}-u)^{+}$ and suppose $\zeta \neq 0$. Then, bearing
in mind that system (\ref{p*}) is cooperative, from (\ref{300}), (\ref{301})
and (\ref{320}), we infer that
\begin{equation*}
\begin{array}{c}
\int_{\{u<\underline{u}\}}|\nabla u|^{p(x)-2}\nabla u\nabla \zeta \
dx=\int_{\Omega }|\nabla u|^{p(x)-2}\nabla u\nabla \zeta \ dx=\int_{\{u<
\underline{u}\}}\widetilde{f}(x,u,v)\zeta \ dx \\
\\
=\int_{\{u<\underline{u}\}}f(x,\widetilde{u},\widetilde{v})\zeta \
dx=\int_{\{u<\underline{u}\}}f(x,\underline{u},\widetilde{v})\zeta \ dx\geq
\int_{\{u<\underline{u}\}}|\nabla \underline{u}|^{p(x)-2}\nabla \underline{u}
\nabla \zeta \ dx.%
\end{array}%
\end{equation*}
This implies that
\begin{equation*}
\begin{array}{c}
\int_{\{u<\underline{u}\}}(|\nabla \underline{u}|^{p(x)-2}\nabla \underline{u%
}-|\nabla {u} |^{p(x)-2}\nabla {u})\nabla \zeta \ dx\leq 0,%
\end{array}%
\end{equation*}
a contradiction. Hence $u\geq \underline{u}$ in $\Omega $. A quite similar
argument provides that $v\geq \underline{v}$ in $\Omega $. In the same way,
we prove that $u\leq \overline{u}$ and $v\leq \overline{v}$ in $\Omega $.

Finally, thanks to Lemma \ref{L1} one has $(u,v)\in C^{1,\nu }(\overline{%
\Omega })\times C^{1,\nu }(\overline{\Omega })$ for some $\nu \in (0,1)$.
This completes the proof.
\end{proof}

\subsection{Competitive system}

The system (\ref{p*}) is called a competitive system if for $u$ (resp. $v$)
fixed the nonlinearity $f$ (resp. $g$) is not increasing in $v$ (resp. $u$).
In sum, this is the complementary situation for system (\ref{p}) with
respect to the case considered in the subsection \ref{subsec1}.

\begin{theorem}
\label{T3} Assume that (\ref{p*}) is a competitive system with $f,g$ being $%
C^{1}$-function. Let $\left( u_{0},v_{0}\right) ,$ $\left(
u_{1},v_{1}\right) \in (W_{0}^{1,p(x)}(\Omega )\cap C(\overline{\Omega }%
))\times (W_{0}^{1,q(x)}(\Omega )\cap C(\overline{\Omega })),$ with $\left(
u_{1},v_{1}\right) \geq \left( u_{0},v_{0}\right) $ in $\Omega ,$ and
\begin{equation}
\left\{
\begin{array}{l}
\int_{\Omega }\left\vert \nabla u_{0}\right\vert ^{p(x)-2}\nabla u_{0}\nabla
\varphi \ dx-\int_{\Omega }f(x,u_{0},v_{0})\varphi \ dx\leq 0, \\
\int_{\Omega }\left\vert \nabla v_{0}\right\vert ^{q(x)-2}\nabla v_{0}\nabla
\psi \ dx-\int_{\Omega }g(x,u_{0},v_{0})\psi \ dx\leq 0,%
\end{array}%
\right.  \label{6**}
\end{equation}%
\begin{equation}
\left\{
\begin{array}{l}
\int_{\Omega }\left\vert \nabla u_{1}\right\vert ^{p(x)-2}\nabla u_{1}\nabla
\varphi \ dx-\int_{\Omega }f(x,u_{1},v_{1})\varphi \ dx\geq 0, \\
\int_{\Omega }\left\vert \nabla v_{1}\right\vert ^{q(x)-2}\nabla v_{1}\nabla
\psi -\int_{\Omega }g(x,u_{1},v_{1})\psi \ dx\geq 0,%
\end{array}%
\right.  \label{7**}
\end{equation}%
for all $\left( \varphi ,\psi \right) \in W_{0}^{1,p(x)}\left( \Omega
\right) \times W_{0}^{1,q(x)}\left( \Omega \right) $ with $\varphi ,\psi
\geq 0$ a.e. in $\Omega .$ Assume in addition that the following conditions
hold:

\begin{description}
\item[\textrm{(i)}] there exist constants $C_{0},C_{0}^{\prime }>0$ and
functions $\theta _{1}(x),\theta _{2}(x)\in C(\overline{\Omega }),$ with $%
\theta _{1}^{-},\theta _{2}^{-}>0,$ such that
\begin{equation}
\begin{array}{l}
u_{1}(x)\leq C_{0}d(x)^{\theta _{1}(x)}\text{ \ and \ }v_{1}(x)\leq
C_{0}^{\prime }d(x)^{\theta _{2}(x)}\text{ \ in }\Omega .%
\end{array}
\label{c1**}
\end{equation}

\item[\textrm{(ii)}] there exist constants $k_{1},k_{2}>0$ and functions $%
\alpha (x),\beta (x)\in C(\overline{\Omega })$ with
\begin{equation}
-1\leq \alpha ^{-}\leq \alpha ^{+}<0,\text{ \ \ }-1\leq \beta ^{-}\leq \beta
^{+}<0  \label{c2**}
\end{equation}
and%
\begin{equation}
\lim_{d(x)\rightarrow 0}N\alpha (x)=L_{1}\in (-1,0),\text{ \ \ \ }%
\lim_{d(x)\rightarrow 0}N\beta (x)=L_{2}\in (-1,0),  \label{c22}
\end{equation}
such that
\begin{equation}
\left\{
\begin{array}{c}
\left\vert f(x,u,v)\right\vert \leq k_{1}d(x)^{\alpha (x)} \\
\left\vert g(x,u,v)\right\vert \leq k_{2}d(x)^{\beta (x)}%
\end{array}%
\right. \text{, \ in }\Omega \times \lbrack u_{0},u_{1}]\times \lbrack
v_{0},v_{1}].  \label{c3**}
\end{equation}

\item[\textrm{(iii)}] there exist $C_{1},C_{1}^{\prime }>0$ and functions $%
\gamma _{1}(x),\gamma _{2}(x)\in C(\overline{\Omega })$ such that
\begin{equation}
\left\{
\begin{array}{c}
|\frac{\partial f}{\partial v}(x,u,v)|\leq C_{1}d(x)^{\gamma _{1}(x)} \\
|\frac{\partial g}{\partial u}(x,u,v)|\leq C_{1}^{\prime }d(x)^{\gamma
_{2}(x)}%
\end{array}%
\right. ,\text{ \ in }\Omega \times \lbrack u_{0},u_{1}]\times \lbrack
v_{0},v_{1}],  \label{c4**}
\end{equation}%
with%
\begin{equation}
\left\{
\begin{array}{l}
\gamma _{1}(x)+\theta _{2}(x)\geq -1 \\
\gamma _{2}(x)+\theta _{1}(x)\geq -1%
\end{array}%
\right. \text{ \ in }\Omega .  \label{c5**}
\end{equation}
\end{description}

Then system (\ref{p*}) has a positive solution $(u,v)$ in $C^{1,\nu }(%
\overline{\Omega })\times C^{1,\nu }(\overline{\Omega })$ for certain $\nu
\in (0,1).$
\end{theorem}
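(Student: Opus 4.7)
The plan is to mimic the Schauder fixed-point scheme from the proof of Theorem \ref{T2}, with additional work to compensate for the loss of coupling monotonicity. For $(z_1,z_2)\in C(\overline{\Omega})\times C(\overline{\Omega})$ I would define the truncations
\[
\widetilde{z}_1:=\min\bigl\{\max\{z_1,u_0\},u_1\bigr\},\quad \widetilde{z}_2:=\min\bigl\{\max\{z_2,v_0\},v_1\bigr\},
\]
and let $\mathcal{T}(z_1,z_2):=(u,v)\in W_0^{1,p(x)}(\Omega)\times W_0^{1,q(x)}(\Omega)$ be the unique weak solution of the decoupled problem
\[
-\Delta_{p(x)} u=f(x,\widetilde z_1,\widetilde z_2),\quad -\Delta_{q(x)} v=g(x,\widetilde z_1,\widetilde z_2)\ \text{in }\Omega,
\]
with zero Dirichlet data. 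By \eqref{c3**} the right-hand sides lie in the dual spaces $W^{-1,p'(x)}(\Omega)$ and $W^{-1,q'(x)}(\Omega)$ via the same Hardy--Sobolev estimate used in Theorem \ref{T2}, and Minty--Browder ensures well-definedness.

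Combining \eqref{c22}, \eqref{c3**} with Lemma \ref{L1} yields a uniform $C^{1,\nu}(\overline{\Omega})^2$ bound on the image of $\mathcal{T}$ for some $\nu\in(0,1)$, hence $\mathcal{T}:C(\overline{\Omega})^2\to C(\overline{\Omega})^2$ is compact. The $C^1$-regularity of $(f,g)$ together with the uniform truncation and the dominating functions $k_1 d(x)^{\alpha(x)}$, $k_2 d(x)^{\beta(x)}$ supplied by \eqref{c3**} give continuity of $\mathcal{T}$ via dominated convergence, after the same dual-space passage used in Theorem \ref{T2}. Schauder's theorem therefore delivers a fixed point $(u,v)=\mathcal{T}(u,v)$.

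The main obstacle -- and the only place where the competitive structure really matters -- is to verify $u_0\le u\le u_1$ and $v_0\le v\le v_1$. The cooperative testing trick fails: testing $(u-u_1)^+$ against the $u$-equation and subtracting the supersolution inequality \eqref{7**} leaves the residual
\[
\int_{\{u>u_1\}}\bigl[f(x,u_1,\widetilde v)-f(x,u_1,v_1)\bigr](u-u_1)\,dx,
\]
which is \emph{non-negative} because $\widetilde v\le v_1$ and $f$ is decreasing in $v$. To override this sign issue I would invoke the Mean Value Theorem with \eqref{c4**} and \eqref{c1**} to obtain
\[
|f(x,u_1,\widetilde v)-f(x,u_1,v_1)|\le C_1 d(x)^{\gamma_1(x)}|\widetilde v-v_1|\le C_1 C_0'\, d(x)^{\gamma_1(x)+\theta_2(x)},
\]
where the exponent $\gamma_1+\theta_2\ge -1$ by \eqref{c5**}. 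Combining this with the boundary vanishing $(u-u_1)^+\le C d(x)$ coming from the uniform $C^{1,\nu}$-bound and a Hardy--Sobolev-type absorption into the coercive monotonicity integral on the left-hand side forces $|\{u>u_1\}|=0$. The remaining three inequalities $u\ge u_0$, $v\le v_1$, $v\ge v_0$ follow by symmetric arguments, invoking $\partial_u g$ together with \eqref{c1**}, \eqref{c4**}, \eqref{c5**} and the sub/super inequalities \eqref{6**}--\eqref{7**}.

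Once the box invariance is secured, $(u,v)$ is a weak solution of \eqref{p*} which is positive because $u_0,v_0>0$ in $\Omega$; a final appeal to Lemma \ref{L1} promotes the regularity to the claimed $C^{1,\nu}(\overline{\Omega})\times C^{1,\nu}(\overline{\Omega})$. The hardest technical step will be the Hardy--Sobolev absorption in the box invariance, since the $p(x)$-Laplacian monotonicity provides $L^{p(x)}$-type coercivity on $\nabla(u-u_1)^+$ whereas the residual is only $L^1$-integrable; matching the two exponents is where conditions \eqref{c4**} and \eqref{c5**} are indispensable.
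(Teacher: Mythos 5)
Your fixed-point scheme (truncation, Hardy--Sobolev dual-space estimates, Lemma \ref{L1} for the uniform $C^{1,\nu}$ bound, Schauder) follows the paper's skeleton, but the step you yourself single out as the crux --- the localization $u_0\le u\le u_1$, $v_0\le v\le v_1$ --- has a genuine gap. Testing with $(u-u_1)^+$ and subtracting \eqref{7**} gives
\begin{equation*}
0\le\int_{\{u>u_1\}}\bigl(|\nabla u|^{p(x)-2}\nabla u-|\nabla u_1|^{p(x)-2}\nabla u_1\bigr)\nabla (u-u_1)^{+}dx\le\int_{\{u>u_1\}}\bigl[f(x,u_1,\widetilde v)-f(x,u_1,v_1)\bigr](u-u_1)^{+}dx,
\end{equation*}
and you propose to bound the right-hand side by $C_1C_0'\int_{\{u>u_1\}} d(x)^{\gamma_1(x)+\theta_2(x)}(u-u_1)^{+}dx$ and ``absorb'' it into the left-hand side. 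There is nothing to absorb against: the monotonicity integral on the left has no quantitative lower bound in terms of the right-hand side, and the right-hand side carries no smallness --- the Mean Value Theorem only controls $|f(x,u_1,\widetilde v)-f(x,u_1,v_1)|$ by $C_1d^{\gamma_1}|\widetilde v-v_1|$, and $|\widetilde v-v_1|$ can be of size $v_1-v_0$, which is of order one. An inequality of the form $0\le A\le B$ with $B\ge 0$ finite forces neither $A=0$ nor $|\{u>u_1\}|=0$; there is no small parameter, no iteration shrinking the bad set, and conditions \eqref{c4**}--\eqref{c5**} together with $(u-u_1)^+\lesssim d(x)$ only show the residual is \emph{integrable}, not negligible. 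The same objection applies to the three ``symmetric'' comparisons, so the box invariance --- and with it the whole proof --- is not established for your unmodified frozen problem.

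The paper's route is structurally different precisely at this point: it monotonizes \emph{before} applying Schauder. The auxiliary problem is solved for the shifted operator $-\Delta_{p(x)}u+\rho\,\widetilde z_2\max\{d(x)^{\gamma_1(x)},|\widetilde z_1|^{p(x)-2}\widetilde z_1,|u|^{p(x)-2}u\}$ with right-hand side $f(x,\widetilde z_1,\widetilde z_2)+\rho\,\widetilde z_2\max\{d(x)^{\gamma_1(x)},|\widetilde z_1|^{p(x)-2}\widetilde z_1\}$ (and the analogous $q(x)$-equation), where $\rho$ is chosen so large, using \eqref{c4**}, that the shifted nonlinearity becomes nondecreasing in the other variable on $[u_0,u_1]\times[v_0,v_1]$; hypotheses \eqref{c1**} and \eqref{c5**} are then used to check that the extra terms are admissible in $W^{-1,p'(x)}(\Omega)$ and $W^{-1,q'(x)}(\Omega)$ (since $d^{\gamma_1}\widetilde z_2\le C_0'd^{\gamma_1+\theta_2}$ with exponent $\ge -1$), not as ingredients of an absorption estimate. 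With this restored monotonicity the comparison argument of the cooperative case (Theorem \ref{T2}) can be run using only the corner inequalities \eqref{6**}--\eqref{7**}. Without some such monotonization (or stronger sub/supersolution inequalities evaluated at opposite corners of the interval), your proposal does not yield the theorem.
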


\begin{proof}
For $(z_{1},z_{2})\in C(\overline{\Omega })\times C(\overline{\Omega }),$
let $(u,v)\in W_{0}^{1,p(x)}(\Omega )\times W_{0}^{1,q(x)}(\Omega )$ be a
solution of the problem
\begin{equation}
\left\{
\begin{array}{ll}
L_{z_{2},p(x)}(u)=\widetilde{f}(x,z_{1},z_{2}) & \text{in }\Omega , \\
L_{z_{1},q(x)}(v)=\widetilde{g}(x,z_{1},z_{2}) & \text{in }\Omega , \\
u,v>0 & \text{in }\Omega , \\
u,v=0 & \text{on }\partial \Omega ,%
\end{array}
\right.  \label{3**}
\end{equation}%
where%
\begin{equation*}
\left\{
\begin{array}{l}
L_{z_{1},z_{2},p(x)}(u)=-\Delta _{p(x)}u+\rho \tilde{z}_{2}\max
\{d(x)^{\gamma _{1}(x)},|\tilde{z}_{1}|^{p(x)-2}\tilde{z}_{1},|u|^{p(x)-2}u\}
\\
L_{z_{1},z_{2},q(x)}(v)=-\Delta _{q(x)}v+\rho \tilde{z}_{1}\max
\{d(x)^{\gamma _{2}(x)},|\tilde{z}_{2}|^{q(x)-2}\tilde{z}_{2},|v|^{q(x)-2}v\}%
\end{array}
\right.
\end{equation*}%
and%
\begin{equation}
\left\{
\begin{array}{c}
\widetilde{f}(x,z_{1},z_{2})=f(x,\tilde{z}_{1},\tilde{z}_{2})+\rho \tilde{z}%
_{2}\max \{d(x)^{\gamma _{1}(x)},|\tilde{z}_{1}|^{p(x)-2}\tilde{z}_{1}\} \\
\widetilde{g}(x,z_{1},z_{2})=g(x,\tilde{z}_{1},\tilde{z}_{2})+\rho \tilde{z}%
_{1}\max \{d(x)^{\gamma _{2}(x)},|\tilde{z}_{2}|^{q(x)-2}\tilde{z}_{2}\},%
\end{array}%
\right.  \label{4**}
\end{equation}%
with
\begin{equation}
\tilde{z}_{1}=\min \left\{ \max \left\{ z_{1},u_{0}\right\} ,u_{1}\right\}
\text{ \ \ and \ \ }\tilde{z}_{2}=\min \left\{ \max \left\{
z_{2},v_{0}\right\} ,v_{1}\right\} .  \label{1**}
\end{equation}%
Obviously,
\begin{equation*}
u_{0}(x)\leq \tilde{z}_{1}(x)\leq u_{1}(x)\quad \mbox{and}\quad v_{0}(x)\leq
\tilde{z}_{2}(x)\leq v_{1}(x)\quad \mbox{in}\quad \Omega .
\end{equation*}%
In the sequel, we fix the constant $\rho >0$ in (\ref{4**}) sufficiently
large so that the following inequalities are satisfied:
\begin{equation*}
\begin{array}{c}
\frac{\partial f}{\partial s_{2}}(x,s_{1},s_{2})+\rho \max \{d(x)^{\gamma
_{1}(x)},|s_{1}|^{p(x)-2}s_{1}\}\geq 0%
\end{array}%
\end{equation*}%
and
\begin{equation*}
\begin{array}{c}
\frac{\partial g}{\partial s_{1}}(x,s_{1},s_{2})+\rho \max \{d(x)^{\gamma
_{2}(x)},|s_{2}|^{q(x)-2}s_{2}\}\geq 0,%
\end{array}%
\end{equation*}%
uniformly in $x\in \Omega ,$ for $(s_{1},s_{2})\in \lbrack
u_{0},u_{1}]\times \lbrack v_{0},v_{1}]$. By the above choice of $\rho $,
the term in the right-hand side of first (resp. second) equation in (\ref%
{3**}) increases as $v$ (resp. $u$) increases.

By (\ref{c3**}) and (\ref{1**}),
\begin{equation}
|f(x,\tilde{z}_{1},\tilde{z}_{2})|\leq k_{1}d(x)^{\alpha (x)}\text{ and }%
\left\vert g(x,\tilde{z}_{1},\tilde{z}_{2})\right\vert \leq k_{2}d(x)^{\beta
(x)}\text{ for a.e. }x\in \Omega .  \label{5**}
\end{equation}

Using continuous embedding $W_{0}^{1,p(x)}(\Omega )\hookrightarrow
W_{0}^{1,p^{-}}(\Omega )$ and (\ref{c2**}), for each $\varphi \in
W_{0}^{1,p(x)}(\Omega )$ we have
\begin{equation*}
\begin{array}{l}
\int_{\Omega }|\varphi |d(x)^{\alpha (x)}\text{ }dx=\int_{\{d<1\}}|\varphi
|d(x)^{\alpha (x)}\text{ }dx+\int_{\{d\geq 1\}}|\varphi |d(x)^{\alpha (x)}%
\text{ }dx \\
\leq \int_{\{d<1\}}|\varphi |d(x)^{\alpha ^{+}}\text{ }dx+\int_{\{d\geq
1\}}|\varphi |\text{ }dx\leq C^{\prime }\left\Vert \varphi \right\Vert
_{W_{0}^{1,p^{-}}(\Omega )}<\infty ,%
\end{array}%
\end{equation*}%
for some positive constant $C^{\prime }$. Here, we used the Hardy-Sobolev
inequality which guarantees that $\varphi d(x)^{\alpha ^{+}}\in L^{r}(\Omega
)$ with $\frac{1}{r}=\frac{1}{p^{-}}-\frac{1+\alpha ^{+}}{N}$. In the same
manner, by using $W_{0}^{1,q(x)}(\Omega )\hookrightarrow
W_{0}^{1,q^{-}}(\Omega )$ and (\ref{c2**}), for $\psi \in
W_{0}^{1,q(x)}(\Omega ),$ we can see that $\int_{\Omega }|\psi |d(x)^{\beta
(x)}$ $dx<\infty $. Furthermore, observe from (\ref{1**}) that
\begin{equation*}
\begin{array}{l}
d(x)^{\gamma _{1}(x)}\tilde{z}_{2}\leq d(x)^{\gamma _{1}(x)}{v_1}(x)\leq
C_{0}^{\prime }d(x)^{\gamma _{1}(x)+\theta _{2}(x)}\text{ \ for a.e. }x\in
\Omega%
\end{array}%
\end{equation*}%
and
\begin{equation*}
\begin{array}{l}
d(x)^{\gamma _{2}(x)}\tilde{z}_{1}\leq d(x)^{\gamma _{2}(x)}{u_1}(x)\leq
C_{0}d(x)^{\gamma _{2}(x)+\theta _{1}(x)}\text{ \ for a.e. }x\in \Omega .%
\end{array}%
\end{equation*}%
Thus, since $\gamma _{1}(x)+\theta _{2}(x)\geq -1$ and $\gamma
_{2}(x)+\theta _{1}(x)\geq -1$ in $\Omega $ (see (\ref{c5**})), similar to
the above argument implies that
\begin{equation*}
\begin{array}{l}
\int_{\Omega }|\varphi |d(x)^{\gamma _{1}(x)+\theta _{2}(x)}dx,\text{ \ }%
\int_{\Omega }|\psi |d(x)^{\gamma _{2}(x)+\theta _{1}(x)}dx<\infty ,%
\end{array}%
\end{equation*}%
for all $(\varphi ,\psi )\in W_{0}^{1,p(x)}(\Omega )\times
W_{0}^{1,q(x)}(\Omega )$. Then, we deduce that
\begin{equation*}
\widetilde{f}(x,z_{1},z_{2})\in W^{-1,p^{\prime }(x)}(\Omega )\text{ and }%
\widetilde{g}(x,z_{1},z_{2})\in W^{-1,q^{\prime }(x)}(\Omega ),
\end{equation*}%
which in turns enable us to conclude, by Minty-Browder Theorem (see, e.g.,
\cite{B}),\ the uniqueness of the solution $(u,v)$ in (\ref{3**}).

Let us introduce the operator
\begin{equation*}
\begin{array}{lll}
\mathcal{T}: & C(\overline{\Omega })\times C(\overline{\Omega }) &
\rightarrow C(\overline{\Omega })\times C(\overline{\Omega }) \\
& \text{ \ \ \ \ }(z_{1},z_{2}) & \mapsto \mathcal{T}(z_{1},z_{2})=(u,v).%
\end{array}%
\end{equation*}%
and let prove, applying Schauder's fixed point theorem, that $\mathcal{T}$
has a fixed point. Observe from (\ref{1**}) that
\begin{equation*}
\begin{array}{l}
\max \{d(x)^{\gamma _{1}(x)},|\tilde{z}_{1}|^{p(x)-2}\tilde{z}%
_{1},|u|^{p(x)-2}u\}-\max \{d(x)^{\gamma _{1}(x)},|\tilde{z}_{1}|^{p(x)-2}%
\tilde{z}_{1}\}\geq 0\text{ \ in }\Omega%
\end{array}%
\end{equation*}%
and%
\begin{equation*}
\begin{array}{l}
\max \{d(x)^{\gamma _{2}(x)},|\tilde{z}_{2}|^{p(x)-2}\tilde{z}%
_{2},|v|^{q(x)-2}v\}-\max \{d(x)^{\gamma _{2}(x)},|\tilde{z}_{2}|^{q(x)-2}%
\tilde{z}_{2}\}\geq 0\text{ \ in }\Omega .%
\end{array}%
\end{equation*}%
Then, by (\ref{c3**}), one has
\begin{equation*}
-\Delta _{p(x)}u\leq f(x,\tilde{z}_{1},\tilde{z}_{2})\leq k_{1}d(x)^{\alpha
(x)}\text{ in }\Omega
\end{equation*}%
and%
\begin{equation*}
-\Delta _{q(x)}v\leq g(x,\tilde{z}_{1},\tilde{z}_{2})\leq k_{2}d(x)^{\beta
(x)}\text{ in }\Omega .
\end{equation*}%
Hence, using (\ref{c22}), Lemma \ref{L1} guarantees that there exist a
constant $C>0$ and $\nu \in (0,1)$ such that
\begin{equation}
(u,v)\in C^{1,\nu }(\overline{\Omega })\times C^{1,\nu }(\overline{\Omega })%
\text{ \ and \ }\left\Vert u\right\Vert _{C^{1,\nu }(\overline{\Omega }%
)},\left\Vert v\right\Vert _{C^{1,\nu }(\overline{\Omega })}\leq C,
\label{2**}
\end{equation}%
where $C>0$ is independent of $u$ and $v$. Then the compactness of the
embedding $C^{1,\nu }(\overline{\Omega })\subset C(\overline{\Omega })$
implies that $\mathcal{T}$ is continuous and compact operator with respect
to the topology of $C(\overline{\Omega })\times C(\overline{\Omega })$.

We are thus in a position to apply Schauder's fixed point theorem to the map
$\mathcal{T}$, which establishes the existence of $(u,v)\in C(\overline{
\Omega })\times C(\overline{\Omega })$ satisfying $(u,v)=\mathcal{T}(u,v).$

Let us justify that%
\begin{equation*}
u_{0}\leq u\leq u_{1}\text{ and }v_{0}\leq v\leq v_{1}\text{ in }\Omega .
\end{equation*}%
Put $w_{1}=(u_{0}-u)^{+},$ $w_{2}=(v_{0}-v)^{+}$. From (\ref{4**}), (\ref%
{1**}) and (\ref{6**}),
\begin{equation*}
\begin{array}{l}
\int_{\{u<u_{0}\}}|\nabla u|^{p(x)-2}\nabla u\nabla w_{1}\ dx+\rho
\int_{\{u<u_{0}\}}\tilde{v}\max \{d(x)^{\gamma _{1}(x)},|\tilde{u}|^{p(x)-2}%
\tilde{u},|u|^{p(x)-2}u\}w_{1}\text{ }dx \\
=\int_{\Omega }|\nabla u|^{p(x)-2}\nabla u\nabla w_{1}\ dx+\rho \int_{\Omega
}\tilde{v}\max \{d(x)^{\gamma _{1}(x)},|\tilde{u}|^{p(x)-2}\tilde{u}%
,|u|^{p(x)-2}u\}w_{1}\text{ }dx \\
=\int_{\{u<u_{0}\}}\widetilde{f}(x,u,v)w_{1}\ dx \\
=\int_{\{u<u_{0}\}}f(x,\tilde{u},\tilde{v})w_{1}\ dx+\rho \int_{\{u<u_{0}\}}%
\tilde{v}\max \{d(x)^{\gamma _{1}(x)},|\tilde{u}|^{p(x)-2}\tilde{u}\}w_{1}\
dx \\
=\int_{\{u<u_{0}\}}f(x,u_{0},\tilde{v})w_{1}\ dx+\rho \int_{\{u<u_{0}\}}%
\tilde{v}\max \{d(x)^{\gamma _{1}(x)},|u_{0}|^{p(x)-2}u_{0}\}w_{1}\ dx \\
\geq \int_{\{u<u_{0}\}}f(x,u_{0},v_{0})w_{1}\ dx+\rho \int_{\{u<{u_{0}}%
\}}v_{0}\max \{d(x)^{\gamma _{1}(x)},|u_{0}|^{p(x)-2}u_{0}\}w_{1}\ dx \\
\geq \int_{\{u<u_{0}\}}|\nabla u_{0}|^{p(x)-2}\nabla u_{0}\nabla w_{1}\
dx+\rho \int_{\{u<u_{0}\}}v_{0}\max \{d(x)^{\gamma
_{1}(x)},|u_{0}|^{p(x)-2}u_{0}\}w_{1}\ dx%
\end{array}%
\end{equation*}%
and similarly%
\begin{equation*}
\begin{array}{l}
\int_{\{v<v_{0}\}}|\nabla v|^{q(x)-2}\nabla v\nabla w_{2}\ dx+\rho
\int_{\{v<v_{0}\}}\tilde{u}\{d(x)^{\gamma _{2}(x)},|\tilde{v}|^{q(x)-2}%
\tilde{v},|v|^{q(x)-2}v\}w_{2}\text{ }dx \\
\geq \int_{\{v<v_{0}\}}|\nabla v_{0}|^{q(x)-2}\nabla v_{0}\nabla w_{2}\
dx+\rho \int_{\{v<v_{0}\}}u_{0}\max \{d(x)^{\gamma
_{2}(x)},|v_{0}|^{q(x)-2}v_{0}\}w_{2}\ dx.%
\end{array}%
\end{equation*}%
This implies that
\begin{equation*}
\begin{array}{l}
\int_{\{u<u_{0}\}}(|\nabla u_{0}|^{p(x)-2}\nabla u_{0}-|\nabla
u|^{p(x)-2}\nabla u)\nabla w_{1}\ dx \\
+\rho \int_{\{u<u_{0}\}}(v_{0}\max \{d(x)^{\gamma
_{1}(x)},|u_{0}|^{p(x)-2}u_{0}\}-\tilde{v}\max \{d(x)^{\gamma _{1}(x)},|%
\tilde{u}|^{p(x)-2}\tilde{u},|u|^{p(x)-2}u\})w_{1}\text{ }dx\leq 0%
\end{array}%
\end{equation*}%
and%
\begin{equation*}
\begin{array}{l}
\int_{\{v<v_{0}\}}(|\nabla v_{0}|^{q(x)-2}\nabla v_{0}-|\nabla
v|^{q(x)-2}\nabla v)\nabla w_{2}\ dx \\
+\rho \int_{\{v<v_{0}\}}(u_{0}\max \{d(x)^{\gamma
_{2}(x)},|v_{0}|^{q(x)-2}v_{0}\}-\tilde{u}\{d(x)^{\gamma _{2}(x)},|\tilde{v}%
|^{q(x)-2}\tilde{v},|v|^{q(x)-2}v\})w_{2}\text{ }dx\leq 0,%
\end{array}%
\end{equation*}%
showing that $u\geq u_{0}$ and $v\geq v_{0}$ in $\Omega $. A quite similar
argument provides that $u\leq u_{1}$ and $v\leq v_{1}$ in $\Omega $.

Finally, thanks to Lemma \ref{L1} one has $(u,v)\in C^{1,\nu }(\overline{%
\Omega })\times C^{1,\nu }(\overline{\Omega })$ for some $\nu \in (0,1)$.
This completes the proof.
\end{proof}

By strengthening the hypotheses on functions $\gamma _{1}$ and $\gamma _{2}$%
, the conclusion in Theorem \ref{T3} is still true if we drop the assumption
\textrm{(i)} by\textrm{\ }assuming that\textrm{\ }$\left( u_{1},v_{1}\right)
$ don't behaves as function $d(x)$ in $\Omega $. This is stated in the next
result which is a variant of Theorem \ref{T3}.

\begin{theorem}
\label{T4} Let $f,g,\alpha $ and $\beta $ as in Theorem \ref{T3} and assume $%
\left( u_{0},v_{0}\right) ,$ $\left( u_{1},v_{1}\right) \in
(W_{0}^{1,p(x)}(\Omega )\cap C(\overline{\Omega }))\times
(W_{0}^{1,q(x)}(\Omega )\cap C(\overline{\Omega })),$ with $\left(
u_{1},v_{1}\right) \geq \left( u_{0},v_{0}\right) $ in $\Omega ,$ satisfy (%
\ref{6**}) and (\ref{7**}). Suppose that \textrm{(iii) }holds with%
\begin{equation}
0>\gamma _{i}^{+}\geq \gamma _{i}^{-}\geq -1,\quad \mbox{for}\quad i=1,2.
\end{equation}%
Then system (\ref{p*}) has a positive solution $(u,v)$ in $C_{0}^{1,\nu }(%
\overline{\Omega })\times C_{0}^{1,\nu }(\overline{\Omega })$ for certain $%
\nu \in (0,1).$
\end{theorem}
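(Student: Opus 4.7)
The plan is to mimic the Schauder-fixed-point scheme used in the proof of Theorem \ref{T3}, with the same truncation $\widetilde{z}_i$ defined by (\ref{1**}), the same regularized operators $L_{z_1,z_2,p(x)}$, $L_{z_1,z_2,q(x)}$, and the same modified nonlinearities $\widetilde{f},\widetilde{g}$ from (\ref{4**}). As before, I would fix $\rho>0$ so large that on the order interval $[u_0,u_1]\times[v_0,v_1]$ the maps $\widetilde{f}(x,\cdot,\cdot)$, $\widetilde{g}(x,\cdot,\cdot)$ are non-decreasing in the second and first variables respectively; this is possible by assumption \textrm{(iii)}. For each $(z_1,z_2)\in C(\overline{\Omega})\times C(\overline{\Omega})$, the plan is then to solve (\ref{3**}), define $\mathcal{T}(z_1,z_2)=(u,v)$, and apply Schauder's fixed point theorem exactly as before.

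The only step where the argument genuinely has to be modified is the verification that $\widetilde{f}(\cdot,z_1,z_2)\in W^{-1,p'(x)}(\Omega)$ and $\widetilde{g}(\cdot,z_1,z_2)\in W^{-1,q'(x)}(\Omega)$, since in the original proof of Theorem \ref{T3} this was obtained through the pointwise bound $d^{\gamma_1(x)}\widetilde{z}_2\leq C_0'\,d^{\gamma_1(x)+\theta_2(x)}$ coming from hypothesis \textrm{(i)}, which we no longer have. Instead, I would exploit that $u_0,u_1,v_0,v_1\in C(\overline{\Omega})\subset L^\infty(\Omega)$, so that from (\ref{1**}) we have $|\widetilde{z}_1|\leq\|u_1\|_\infty$ and $|\widetilde{z}_2|\leq\|v_1\|_\infty$, hence
\begin{equation*}
d(x)^{\gamma_1(x)}\widetilde{z}_2\leq \|v_1\|_\infty\, d(x)^{\gamma_1(x)},\qquad d(x)^{\gamma_2(x)}\widetilde{z}_1\leq \|u_1\|_\infty\, d(x)^{\gamma_2(x)}.
\end{equation*}
Because $0>\gamma_i^+\geq\gamma_i^-\geq -1$, on the set $\{d<1\}$ we have $d(x)^{\gamma_i(x)}\leq d(x)^{\gamma_i^-}$, and the Hardy--Sobolev inequality (exactly as used for the exponents $\alpha(x),\beta(x)$ in Theorem \ref{T3}) gives $\int_\Omega |\varphi|\,d(x)^{\gamma_1^-}\,dx<\infty$ for every $\varphi\in W_0^{1,p(x)}(\Omega)$, and similarly for $\psi$. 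Combined with the bounds on $|f|,|g|$ from \textrm{(ii)}, this places $\widetilde{f}(\cdot,z_1,z_2)$ and $\widetilde{g}(\cdot,z_1,z_2)$ in the desired dual spaces and, by Minty--Browder, gives existence and uniqueness of the solution to (\ref{3**}).

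Once this is established, the rest runs as in Theorem \ref{T3}. The bound $|\widetilde{f}(x,z_1,z_2)|\leq k_1\,d(x)^{\alpha(x)}+C\,d(x)^{\gamma_1(x)}$ (and analogously for $\widetilde{g}$), together with assumption \textrm{(ii)} condition (\ref{c22}) on $\alpha,\beta$ and the fact that $\gamma_i\in(-1,0)$ satisfies the hypothesis of Lemma \ref{L1} (for instance with a slightly enlarged $\gamma$ in (\ref{*})), yields a uniform $C^{1,\nu}(\overline{\Omega})$ bound on $(u,v)$. From this one deduces that $\mathcal{T}$ maps $C(\overline{\Omega})\times C(\overline{\Omega})$ into a precompact set, that it is continuous, and Schauder's theorem delivers a fixed point $(u,v)$.

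The final step is to show $u_0\leq u\leq u_1$ and $v_0\leq v\leq v_1$. I would reproduce verbatim the test-function argument from Theorem \ref{T3}: test with $w_1=(u_0-u)^+$ and $w_2=(v_0-v)^+$, use the monotonicity built into $\widetilde{f},\widetilde{g}$ through the $\rho$-term to move from $\widetilde{u},\widetilde{v}$ back to $u_0,v_0$, and then invoke the subsolution inequalities (\ref{6**}); symmetrically handle the upper bounds via (\ref{7**}) with $(u-u_1)^+,(v-v_1)^+$. This gives $(u,v)\in[u_0,u_1]\times[v_0,v_1]$, on which the truncation is trivial and the regularization $\rho$-term cancels between $L$ and $\widetilde{f},\widetilde{g}$, so $(u,v)$ solves (\ref{p*}). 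The $C^{1,\nu}$ regularity then follows from Lemma \ref{L1}. The main obstacle of the proof, as emphasised above, is the integrability verification in the second paragraph, which replaces the use of the boundary-decay hypothesis \textrm{(i)} by the combination of an $L^\infty$ bound on the truncations and the endpoint-inclusive Hardy--Sobolev inequality permitted by $\gamma_i^-\geq -1$.
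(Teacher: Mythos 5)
Your overall strategy coincides with the paper's: the entire content of the proof of Theorem \ref{T4} is that, once hypothesis \textrm{(i)} is dropped, the estimate $d(x)^{\gamma _{1}(x)}\tilde{z}_{2}\leq C_{0}^{\prime }d(x)^{\gamma _{1}(x)+\theta _{2}(x)}$ used in Theorem \ref{T3} is replaced by $d(x)^{\gamma _{1}(x)}\tilde{z}_{2}\leq \Vert v_{1}\Vert _{\infty }d(x)^{\gamma _{1}(x)}$ (and symmetrically for $\tilde{z}_{1}$), and the strengthened assumption $\gamma _{i}^{-}\geq -1$ is exactly what makes the Hardy--Sobolev step go through, so that $\widetilde{f}\in W^{-1,p^{\prime }(x)}(\Omega )$, $\widetilde{g}\in W^{-1,q^{\prime }(x)}(\Omega )$ and the whole machinery of Theorem \ref{T3} (truncation, choice of $\rho $, Schauder, comparison) runs unchanged. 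Your second and fourth paragraphs are precisely this, and they are correct.

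There is, however, a flaw in your third paragraph (the compactness/regularity step). You propose to apply Lemma \ref{L1} to a right-hand side bounded by $k_{1}d(x)^{\alpha (x)}+Cd(x)^{\gamma _{1}(x)}$ and assert that $\gamma _{i}\in (-1,0)$ ``satisfies the hypothesis of Lemma \ref{L1}'', possibly after slightly enlarging $\gamma $ in (\ref{*}). This is not so: condition (\ref{*}) requires $\lim_{d(x)\rightarrow 0}N\gamma (x)=L\in (0,1)$, i.e. the admissible singularity must be weaker than $d^{-1/N}$, whereas under the hypotheses of Theorem \ref{T4} the exponent $\gamma _{1}$ may be as negative as $-1$; since $N\geq 2$, no small perturbation of the exponent brings $d(x)^{\gamma _{1}(x)}$ within the scope of Lemma \ref{L1}, so this step fails as written. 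The gap is avoidable, and the argument of Theorem \ref{T3} (which Theorem \ref{T4} simply inherits) shows how: the $\rho $-terms occur on both sides of (\ref{3**}), and since $\tilde{z}_{2}\geq v_{0}\geq 0$ and $\max \{d^{\gamma _{1}},|\tilde{z}_{1}|^{p(x)-2}\tilde{z}_{1},|u|^{p(x)-2}u\}\geq \max \{d^{\gamma _{1}},|\tilde{z}_{1}|^{p(x)-2}\tilde{z}_{1}\}$, the solution of (\ref{3**}) satisfies $-\Delta _{p(x)}u\leq f(x,\tilde{z}_{1},\tilde{z}_{2})\leq k_{1}d(x)^{\alpha (x)}$ (and the analogue for $v$), so only the exponent $\alpha $, which does satisfy (\ref{c22}), enters the Lemma \ref{L1} estimate; the assumption $\gamma _{i}^{-}\geq -1$ is needed only for the dual-space membership via Hardy--Sobolev, exactly as in your second paragraph. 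With the third paragraph corrected in this way, your proposal reproduces the paper's proof.
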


\begin{proof}
From (\ref{1**}), notice that
\begin{equation*}
\begin{array}{l}
d(x)^{\gamma _{1}(x)}\tilde{z}_{2}\leq d(x)^{\gamma _{1}(x)}{v_{1}}(x)\leq
C_{0}^{\prime }d(x)^{\gamma _{1}(x)}\text{ \ for a.e. }x\in \Omega%
\end{array}%
\end{equation*}%
and
\begin{equation*}
\begin{array}{l}
d(x)^{\gamma _{2}(x)}\tilde{z}_{1}\leq d(x)^{\gamma _{2}(x)}{u_{1}}(x)\leq
C_{0}d(x)^{\gamma _{2}(x)}\text{ \ for a.e. }x\in \Omega .%
\end{array}%
\end{equation*}%
Then, the proof can be achieved by following a quite similar argument in
Theorem \ref{T3}.
\end{proof}

\section{Proof of Theorem \protect\ref{T1}}

\label{sec4}

Given a constant $\sigma >0$, let $w_{1}$ and $w_{2}$ be solutions of the
homogeneous Dirichlet problems
\begin{equation}
\left\{
\begin{array}{ll}
-\Delta _{p(x)}w_{1}=\lambda ^{\sigma }w_{1}^{\alpha _{1}(x)} & \text{ in }%
\Omega \\
w_{1}>0 & \text{ in }\Omega \\
w_{1}=0 & \text{ on }\partial \Omega ,%
\end{array}%
\right. ,\text{ \ }\left\{
\begin{array}{ll}
-\Delta _{q(x)}w_{2}=\lambda ^{\sigma }w_{2}^{\beta _{2}(x)} & \text{ in }%
\Omega \\
w_{2}>0 & \text{ in }\Omega \\
w_{2}=0 & \text{ on }\partial \Omega ,%
\end{array}%
\right.  \label{10}
\end{equation}%
which are known to satisfy%
\begin{equation}
\min \{\delta ,d(x)\}\leq w_{1}(x)\leq C_{1}\lambda ^{\frac{\sigma }{p^{-}-1}%
}\quad \mbox{in}\quad \Omega  \label{66}
\end{equation}%
and%
\begin{equation}
\min \{\delta ,d(x)\}\leq w_{2}(x)\leq C_{2}\lambda ^{\frac{\sigma }{q^{-}-1}%
}\quad \mbox{in}\quad \Omega .  \label{67}
\end{equation}
for some positive constant $C_{1},$ $C_{2}$ independent of $\lambda $ and
for $\delta >0$ small (see Lemma \ref{L6}).

Fix $\sigma \in (0,1)$ and let consider the functions $\underline{u}$ and $%
\underline{v}$ defined by
\begin{equation}
-\Delta _{p(x)}\underline{u}=\left\{
\begin{array}{ll}
\lambda ^{\sigma }w_{1}^{\alpha _{1}(x)} & \text{ in \ }\Omega \backslash
\overline{\Omega }_{\delta } \\
-w_{1}^{\alpha _{1}(x)} & \text{\ in \ }\Omega _{\delta }%
\end{array}%
\right. ,\text{ }\underline{u}=0\text{ \ on }\partial \Omega  \label{30}
\end{equation}%
and
\begin{equation}
-\Delta _{q(x)}\underline{v}=\left\{
\begin{array}{ll}
\lambda ^{\sigma }w_{2}^{\beta _{2}(x)} & \text{\ in \ }\Omega \backslash
\overline{\Omega }_{\delta } \\
-w_{2}^{\beta _{2}(x)} & \text{\ in \ }\Omega _{\delta }%
\end{array}%
\right. ,\text{ }\underline{v}=0\text{ \ on }\partial \Omega ,  \label{30*}
\end{equation}%
where
\begin{equation}
\Omega _{\delta }=\left\{ x\in \Omega :d\left( x,\partial \Omega \right)
<\delta \right\} ,  \label{70}
\end{equation}%
with a constant $\delta >0$ small. Using $W_{0}^{1,p(x)}(\Omega
)\hookrightarrow W_{0}^{1,p^{-}}(\Omega )$ together with (\ref{h1}) and (\ref%
{h3}), for each $\varphi \in W_{0}^{1,p(x)}(\Omega )$ we get
\begin{equation*}
\begin{array}{l}
\int_{\Omega }|\varphi |d(x)^{\alpha _{1}(x)}\text{ }dx=\int_{\{d<1\}}|%
\varphi |d(x)^{\alpha _{1}(x)}\text{ }dx+\int_{\{d\geq 1\}}|\varphi
|d(x)^{\alpha _{1}(x)}\text{ }dx \\
\leq \int_{\{d<1\}}|\varphi |d(x)^{\alpha _{1}^{+}}\text{ }dx+\int_{\{d\geq
1\}}|\varphi |\text{ }dx\leq C^{\prime }\left\Vert \varphi \right\Vert
_{W_{0}^{1,p^{-}}(\Omega )}%
\end{array}%
\end{equation*}%
for some positive constant $C^{\prime }$. Here we used the Hardy-Sobolev
Inequality which guarantees that $\varphi d(x)^{\alpha _{1}^{+}}\in
L^{r}(\Omega )$ with $\frac{1}{r}=\frac{1}{p^{-}}-\frac{1+\alpha _{1}^{+}}{N}
$. Similar arguments furnishes that there is $C^{\prime }>0$ such that
\begin{equation*}
\int_{\Omega }|\psi |d(x)^{\beta _{2}(x)}dx\leq C^{\prime }\left\Vert \psi
\right\Vert _{W_{0}^{1,q^{-}}(\Omega )},\quad \forall \psi \in
W_{0}^{1,q(x)}(\Omega ).
\end{equation*}%
Hence, the right-hand side of (\ref{30}) and (\ref{30*}) belongs to $%
W^{-1,p^{\prime }(x)}(\Omega )$ and $W^{-1,q^{\prime }(x)}(\Omega )$,
respectively. Consequently, the Minty-Browder Theorem (see \cite[Theorem V.15%
]{B}) implies the existence and uniqueness of $\underline{u}$ and $%
\underline{v}$ in (\ref{30}) and (\ref{30*}). Moreover, (\ref{10}), (\ref{30}%
), (\ref{30*}) and Lemma \ref{L2} together with the weak comparison
principle yield
\begin{equation}
\begin{array}{l}
\frac{w_{1}(x)}{2}\leq \underline{u}(x)\leq w_{1}(x)\text{ \ \ in }\Omega ,%
\end{array}
\label{31}
\end{equation}%
and
\begin{equation}
\begin{array}{l}
\frac{w_{2}(x)}{2}\leq \underline{v}(x)\leq w_{2}(x)\text{ \ \ in }\Omega .%
\end{array}
\label{31*}
\end{equation}

In what follows, we fix $\tilde{\Omega}$ as a smooth bounded domain in $%
\mathbb{R}
^{N}$ such that $\overline{\Omega }\subset \tilde{\Omega}.$ Denote by $%
\tilde{d}(x)=dist(x,\partial \tilde{\Omega}).$ Define $\overline{u}$ and $%
\overline{v}$ in $C^{1,\nu }(\overline{\tilde{\Omega}}),$ for certain $\nu
\in (0,1)$, as the unique weak solutions of the problems
\begin{equation}
\left\{
\begin{array}{ll}
-\Delta _{p(x)}\overline{u}=\lambda ^{\bar{\sigma}} & \text{in }\tilde{\Omega%
}, \\
\overline{u}=0 & \text{on }\partial \tilde{\Omega},%
\end{array}%
\right. ,\text{ \ }\left\{
\begin{array}{ll}
-\Delta _{q(x)}\overline{v}=\lambda ^{\bar{\sigma}} & \text{in }\tilde{\Omega%
}, \\
\overline{v}=0 & \text{on }\partial \tilde{\Omega},%
\end{array}%
\right.   \label{20}
\end{equation}%
where the constant $\bar{\sigma}>0$ verifies%
\begin{equation}
\begin{array}{l}
\bar{\sigma}>\max \{\frac{p^{-}-1}{p^{-}-1-\alpha _{2}^{+}},\frac{q^{-}-1}{%
q^{-}-1-\beta _{1}^{+}}\}.%
\end{array}
\label{3}
\end{equation}%
It is known that $\overline{u}$ and $\overline{v}$ satisfy
\begin{equation}
\begin{array}{l}
\overline{u}(x)\leq c_{2}\lambda ^{\frac{\bar{\sigma}}{p^{-}-1}}\text{ \ and
\ }\overline{v}(x)\leq c_{2}^{\prime }\lambda ^{\frac{\bar{\sigma}}{q^{-}-1}}%
\text{ \ in }\tilde{\Omega},%
\end{array}
\label{21}
\end{equation}%
and
\begin{equation}
c_{0}\delta \leq \min \{\overline{u}(x),\overline{v}(x)\}\quad \mbox{if}%
\quad \tilde{d}(x)\geq \delta ,  \label{211}
\end{equation}%
where $c_{0}$ is independent of $\lambda $ large enough (see \cite{YY}).
From this, we have that
\begin{equation}
\begin{array}{l}
c_{0}\delta \leq \overline{u}(x)\leq c_{2}\lambda ^{\frac{\bar{\sigma}}{%
p^{-}-1}}\text{ \ and \ }c_{0}\delta \leq \overline{v}(x)\leq c_{2}^{\prime
}\lambda ^{\frac{\bar{\sigma}}{q^{-}-1}}\text{ \ in }\,\,\overline{\Omega },%
\end{array}
\label{2111}
\end{equation}%
for $\delta >0$ sufficiently small.

\begin{lemma}
\label{L3}Under assumptions (\ref{h3}) and (\ref{h1}), for $\lambda >0$
sufficiently large, $(\underline{u},\underline{v})$ and $(\overline{u},%
\overline{v})$ are subsolution and supersolution for problem (\ref{p})
respectively.
\end{lemma}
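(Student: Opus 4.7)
The plan is to verify the four weak differential inequalities that define a sub/supersolution pair for problem (\ref{p}): namely $-\Delta_{p(x)}\underline{u}\leq \lambda\underline{u}^{\alpha_1(x)}\omega_2^{\beta_1(x)}$ and $-\Delta_{q(x)}\underline{v}\leq \lambda\omega_1^{\alpha_2(x)}\underline{v}^{\beta_2(x)}$ on the subsolution side, together with the reverse inequalities for $(\overline{u},\overline{v})$, for every admissible $\omega_1\in[\underline{u},\overline{u}]$ and $\omega_2\in[\underline{v},\overline{v}]$. By the symmetry between the two equations, I would carry out the $u$-component in detail; the $v$-component is strictly analogous after exchanging the roles of $(p,\alpha_1,\beta_1,w_1)$ with $(q,\beta_2,\alpha_2,w_2)$. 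The cooperative hypothesis (\ref{h3}) (in particular $\beta_1^->0$) and the bounds on $\alpha_1$ in (\ref{h1}) (in particular $\alpha_1^+<0$) will be used throughout.

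For the subsolution inequality for $\underline{u}$, I would split $\Omega$ along the boundary strip $\Omega_\delta$. On $\Omega_\delta$, definition (\ref{30}) gives $-\Delta_{p(x)}\underline{u}=-w_1^{\alpha_1(x)}<0$, while the target right-hand side $\lambda\underline{u}^{\alpha_1(x)}\omega_2^{\beta_1(x)}$ is strictly positive, so the inequality is automatic. On $\Omega\setminus\overline{\Omega}_\delta$ one has $d(x)\geq\delta$, so by (\ref{67}) and (\ref{31*}) any admissible $\omega_2$ satisfies $\omega_2\geq\underline{v}\geq w_2/2\geq\delta/2$; since $\beta_1>0$ this forces $\omega_2^{\beta_1(x)}\geq C(\delta)>0$ uniformly in $\lambda$. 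Combining this with $\underline{u}^{\alpha_1(x)}\geq w_1^{\alpha_1(x)}$, obtained from $\underline{u}\leq w_1$ (via (\ref{31})) and $\alpha_1<0$, the required inequality
\[
\lambda^{\sigma}w_1^{\alpha_1(x)}\;\leq\;\lambda\,\underline{u}^{\alpha_1(x)}\omega_2^{\beta_1(x)}
\]
reduces to $\lambda^{\sigma-1}\leq C(\delta)$, which holds for every sufficiently large $\lambda$ since $\sigma\in(0,1)$.

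For the supersolution inequality for $\overline{u}$, I would use that $-\Delta_{p(x)}\overline{u}=\lambda^{\bar{\sigma}}$ holds throughout $\tilde{\Omega}\supset\overline{\Omega}$. The lower bound (\ref{2111}) gives $\overline{u}\geq c_0\delta$ in $\overline{\Omega}$, so $\overline{u}^{\alpha_1(x)}$ is bounded above by a $\lambda$-independent constant $C_\delta$ (using $\alpha_1<0$). The upper bound (\ref{21}) together with $\beta_1>0$ yields $\omega_2^{\beta_1(x)}\leq \overline{v}^{\beta_1(x)}\leq C\lambda^{\bar{\sigma}\beta_1^+/(q^--1)}$ for $\lambda$ large. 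The inequality $\lambda^{\bar{\sigma}}\geq \lambda\,\overline{u}^{\alpha_1(x)}\omega_2^{\beta_1(x)}$ then amounts to $1+\bar{\sigma}\beta_1^+/(q^--1)\leq \bar{\sigma}$, equivalently $\bar{\sigma}\geq (q^--1)/(q^--1-\beta_1^+)$, which is exactly one of the two lower bounds prescribed in (\ref{3}); the parallel check for $\overline{v}$ produces the complementary bound $\bar{\sigma}\geq(p^--1)/(p^--1-\alpha_2^+)$, so the maximum appearing in (\ref{3}) is precisely what is needed.

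The step I expect to require the most care is the $\lambda$-exponent bookkeeping: one must simultaneously choose $\sigma\in(0,1)$ so that the extra $\lambda^\sigma$ factor in (\ref{30})--(\ref{30*}) is weaker than the true $\lambda$ in (\ref{p}) (this is what makes $\lambda^{\sigma-1}\to 0$ usable), and choose $\bar{\sigma}$ satisfying (\ref{3}) so that the pure constant $\lambda^{\bar{\sigma}}$ produced by (\ref{20}) dominates the cross-terms $\overline{v}^{\beta_1^+}$ and $\overline{u}^{\alpha_2^+}$ on the right-hand side of (\ref{p}). All other ingredients (positivity of $\omega_j^{\beta_i}$ away from $\partial\Omega$, the $w_i$-envelope bounds, and the negativity of $\alpha_1,\beta_2$) serve to make each region's inequality collapse into a single algebraic condition on $\sigma$ or $\bar{\sigma}$.
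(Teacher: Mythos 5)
Your verification of the four differential inequalities is correct and follows essentially the same route as the paper: the same splitting of $\Omega$ into $\Omega _{\delta }$ and $\Omega \backslash \overline{\Omega }_{\delta }$ for the subsolution (sign of $-w_{1}^{\alpha _{1}(x)}$ near the boundary, the envelope bounds (\ref{31}), (\ref{31*}), (\ref{66}), (\ref{67}) away from it, and $\sigma <1$ to absorb $\lambda ^{\sigma -1}$), and the same use of (\ref{21}), (\ref{2111}) and the exponent bookkeeping leading to (\ref{3}) for the supersolution. One small imprecision: since the comparison constants (e.g. $(c_{0}\delta )^{\alpha _{1}^{-}}$, $(c_{2}^{\prime })^{\beta _{1}^{+}}$) need not be $\leq 1$, the exponent condition must be strict, $\bar{\sigma}>\frac{q^{-}-1}{q^{-}-1-\beta _{1}^{+}}$ and $\bar{\sigma}>\frac{p^{-}-1}{p^{-}-1-\alpha _{2}^{+}}$, which is exactly how (\ref{3}) is stated; with a non-strict inequality the required estimate for large $\lambda $ could fail.

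There is, however, a genuine omission: you never verify that $(\underline{u},\underline{v})\leq (\overline{u},\overline{v})$ in $\Omega $. This ordering is part of the definition of a sub-supersolution pair in Section \ref{sec3} and is indispensable for applying Theorem \ref{T2} (the order interval $[\underline{u},\overline{u}]\times \lbrack \underline{v},\overline{v}]$ must be nonempty for the truncation scheme, and your own supersolution check already uses $\omega _{2}\leq \overline{v}$ with $\overline{v}$ the upper envelope). The ordering is not automatic from the four inequalities; the paper proves it first, by comparing the data: in $\Omega _{\delta }$, $-\Delta _{p(x)}\overline{u}=\lambda ^{\bar{\sigma}}\geq 0\geq -w_{1}^{\alpha _{1}(x)}=-\Delta _{p(x)}\underline{u}$, while in $\Omega \backslash \overline{\Omega }_{\delta }$ one has $w_{1}\geq \delta $ and $\alpha _{1}<0$, hence $\lambda ^{\sigma }w_{1}^{\alpha _{1}(x)}\leq \lambda ^{\sigma }\delta ^{\alpha _{1}(x)}\leq \lambda ^{\bar{\sigma}}$ for $\lambda $ large because $\sigma <1<\bar{\sigma}$; the weak comparison principle (monotonicity of $-\Delta _{p(x)}$, and likewise for $-\Delta _{q(x)}$) then yields $\underline{u}\leq \overline{u}$ and $\underline{v}\leq \overline{v}$. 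You should add this step (or an equivalent comparison argument) to make the proof of the lemma complete.
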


\begin{proof}
First of all, $\left( \overline{u},\overline{v}\right) \geq \left(
\underline{u},\underline{v}\right) $ in $\overline{\Omega },$ for $\lambda $
sufficiently large. Indeed, From (\ref{20}), (\ref{66}), (\ref{67}), (\ref%
{30}), (\ref{30*}) and since $0<\sigma <1<\bar{\sigma}$, one has%
\begin{equation*}
\begin{array}{l}
-\Delta _{p(x)}\overline{u}=\lambda ^{\bar{\sigma}}\geq -w_{1}^{\alpha
_{1}(x)}=-\Delta _{p(x)}\underline{u}\text{ \ in }\Omega _{\delta },%
\end{array}%
\end{equation*}%
\begin{equation*}
\begin{array}{l}
-\Delta _{q(x)}\overline{v}=\lambda ^{\bar{\sigma}}\geq -w_{2}^{\beta
_{2}(x)}=-\Delta _{q(x)}\underline{v}\text{ \ in }\Omega _{\delta },%
\end{array}%
\end{equation*}%
\begin{equation*}
\begin{array}{l}
-\Delta _{p(x)}\overline{u}=\lambda ^{\bar{\sigma}}\geq \lambda ^{\sigma
}\delta ^{\alpha _{1}(x)}\geq \lambda ^{\sigma }w_{1}^{\alpha
_{1}(x)}=-\Delta _{p(x)}\underline{u}\text{ \ in }\Omega \backslash
\overline{\Omega }_{\delta }%
\end{array}%
\end{equation*}%
and
\begin{equation*}
\begin{array}{l}
-\Delta _{q(x)}\overline{v}=\lambda ^{\bar{\sigma}}\geq \lambda ^{\sigma
}\delta ^{\beta _{2}(x)}\geq \lambda ^{\sigma }w_{2}^{\beta _{2}(x)}=-\Delta
_{q(x)}\underline{v}\text{ \ \ in }\Omega \backslash \overline{\Omega }%
_{\delta },%
\end{array}%
\end{equation*}%
provided that $\lambda $ is large enough. Then the monotonicity of the
operators $-\Delta _{p(x)}$ and $-\Delta _{q(x)}$ lead to
\begin{equation}
\underline{u}\leq \overline{u}\quad \mbox{and}\quad \underline{v}\leq
\overline{v}\quad \mbox{in}\quad \Omega ,  \label{EQ1}
\end{equation}%
for $\lambda $ sufficiently large.

Now, we will show that $\left( \overline{u},\overline{v}\right) $ is a
subsolution for (\ref{p}). In fact, by (\ref{10}), (\ref{30}) and (\ref{30*}%
), we have%
\begin{equation}
\begin{array}{l}
-\underline{u}^{-\alpha _{1}(x)}\underline{v}^{-\beta _{1}(x)}w_{1}^{\alpha
_{1}(x)}\leq 0\leq \lambda \text{ \ in }\Omega _{\delta }%
\end{array}
\label{15}
\end{equation}%
and
\begin{equation}
\begin{array}{l}
-\underline{u}^{-\alpha _{2}(x)}\underline{v}^{-\beta _{2}(x)}w_{2}^{\beta
_{2}(x)}\leq 0\leq \lambda \text{ \ in }\Omega _{\delta },%
\end{array}
\label{16}
\end{equation}%
for all $\lambda >0$. On the other hand, from (\ref{h3}), (\ref{h1}), (\ref%
{31}) and (\ref{31*}), since $\sigma \in (0,1)$, we obtain
\begin{equation}
\begin{array}{l}
\lambda ^{\sigma }\underline{u}^{-\alpha _{1}(x)}\underline{v}^{-\beta
_{1}(x)}w_{1}^{\alpha _{1}(x)}\leq \lambda ^{\sigma }w_{1}^{-\alpha _{1}(x)}(%
\frac{w_{2}}{2})^{-\beta _{1}(x)}w_{1}^{\alpha _{1}(x)} \\
\leq \lambda ^{\sigma }\delta ^{-\beta _{1}(x)}\leq \lambda \text{ in }%
\Omega \backslash \overline{\Omega }_{\delta },%
\end{array}
\label{17}
\end{equation}%
and
\begin{equation}
\begin{array}{l}
\lambda ^{\sigma }\underline{u}^{-\alpha _{2}(x)}\underline{v}^{-\beta
_{2}(x)}w_{2}^{\beta _{2}(x)}\leq \lambda ^{\sigma }(\frac{w_{1}}{2}%
)^{-\alpha _{2}(x)}w_{2}^{-\beta _{2}(x)}w_{2}^{\beta _{2}(x)} \\
\leq \lambda ^{\sigma }\delta ^{-\alpha _{2}(x)}\leq \lambda \text{ \ in }%
\Omega \backslash \overline{\Omega }_{\delta },%
\end{array}
\label{18}
\end{equation}%
provided that $\lambda $ is sufficiently large. Let $\left( \varphi ,\psi
\right) \in W_{0}^{1,p(x)}\left( \Omega \right) \times W_{0}^{1,q(x)}\left(
\Omega \right) $ with $\varphi ,\psi \geq 0$ a.e. in $\Omega $. Using (\ref%
{15})-(\ref{18}), (\ref{30}) and (\ref{30*}), it follows that
\begin{equation*}
\begin{array}{l}
\int_{\Omega }\left\vert \nabla \underline{u}\right\vert ^{p(x)-2}\nabla
\underline{u}\nabla \varphi \ dx=\lambda ^{\sigma }\int_{\Omega \backslash
\overline{\Omega }_{\delta }}w_{1}^{\alpha _{1}(x)}\varphi \text{ }%
dx-\int_{\Omega _{\delta }}w_{1}^{\alpha _{1}(x)}\varphi \text{ }dx \\
\leq \lambda \int_{\Omega }\underline{u}^{\alpha _{1}(x)}\underline{v}%
^{\beta _{1}(x)}\varphi \ dx\leq \lambda \int_{\Omega }\underline{u}^{\alpha
_{1}(x)}w^{\beta _{1}(x)}\varphi \text{ }dx%
\end{array}%
\end{equation*}%
and
\begin{equation*}
\begin{array}{l}
\int_{\Omega }\left\vert \nabla \underline{v}\right\vert ^{q(x)-2}\nabla
\underline{v}\nabla \psi \ dx=\lambda ^{\sigma }\int_{\Omega \backslash
\overline{\Omega }_{\delta }}w_{2}^{\beta _{2}(x)}\psi \text{ }%
dx-\int_{\Omega _{\delta }}w_{2}^{\beta _{2}(x)}\psi \text{ }dx \\
\leq \lambda \int_{\Omega }\underline{u}^{\alpha _{2}(x)}\underline{v}%
^{\beta _{2}(x)}\psi \ dx\leq \lambda \int_{\Omega }\zeta ^{\alpha _{2}(x)}%
\underline{v}^{\beta _{2}(x)}\psi \ dx%
\end{array}%
\end{equation*}%
for $\lambda >0$ sufficiently large, $\zeta \in \lbrack \underline{u},%
\overline{u}]$, $w\in \lbrack \underline{v},\overline{v}]$ and $\left(
\varphi ,\psi \right) \in W_{0}^{1,p(x)}\left( \Omega \right) \times
W_{0}^{1,q(x)}\left( \Omega \right) $ with $\varphi ,\psi \geq 0$ a.e. in $%
\Omega $. This shows that $(\underline{u},\underline{v})$ is a subsolution
for problem (\ref{p}).

The task is now to prove that $\left( \overline{u},\overline{v}\right) $
defined in (\ref{20}) is a supersolution of (\ref{p}). On account of (\ref%
{h1}), (\ref{h3}), (\ref{20}), (\ref{3}) and (\ref{2111}), one has
\begin{equation*}
\begin{array}{l}
-\Delta _{p(x)}\overline{u}=\lambda ^{\bar{\sigma}}\geq \lambda ^{1+\frac{%
\bar{\sigma}\beta _{1}^{+}}{q^{-}-1}}(c_{0}\delta )^{\alpha
_{1}^{-}}(c_{1}^{\prime })^{\beta _{1}^{+}} \\
\geq \lambda ^{1+\frac{\bar{\sigma}\beta _{1}(x)}{q^{-}-1}}(c_{0}\delta
)^{\alpha _{1}(x)}(c_{2}^{\prime })^{\beta _{1}(x)}\geq \lambda \overline{u}%
^{\alpha _{1}(x)}\overline{v}^{\beta _{1}(x)}\text{ \ in }\overline{\Omega }%
\end{array}%
\end{equation*}%
and
\begin{equation*}
\begin{array}{l}
-\Delta _{q(x)}\overline{v}=\lambda ^{\bar{\sigma}}\geq \lambda ^{1+\frac{%
\bar{\sigma}\alpha _{2}^{+}}{p^{-}-1}}(c_{2})^{\alpha _{2}^{+}}(c_{0}\delta
)^{\beta _{2}^{-}} \\
\geq \lambda ^{1+\frac{\bar{\sigma}\alpha _{2}(x)}{p^{-}-1}}(c_{2})^{\alpha
_{2}(x)}(c_{0}\delta )^{\beta _{2}(x)}\geq \lambda \overline{u}^{\alpha
_{2}(x)}\overline{v}^{\beta _{2}(x)}\text{ \ in }\overline{\Omega },%
\end{array}%
\end{equation*}%
provided that $\lambda >0$ is sufficiently large. Consequently,
\begin{equation*}
\begin{array}{l}
\int_{\Omega }\left\vert \nabla \overline{u}\right\vert ^{p(x)-2}\nabla
\overline{u}\nabla \varphi \text{ }dx\geq \lambda \int_{\Omega }\overline{u}%
^{\alpha _{1}(x)}\overline{v}^{\beta _{1}(x)}\varphi \ dx\geq \lambda
\int_{\Omega }\overline{u}^{\alpha _{1}(x)}w^{\beta _{1}(x)}\varphi%
\end{array}%
\end{equation*}%
\begin{equation*}
\begin{array}{l}
\int_{\Omega }\left\vert \nabla \overline{v}\right\vert ^{q(x)-2}\nabla
\overline{v}\nabla \psi \text{ }dx\geq \lambda \int_{\Omega }\overline{u}%
^{\alpha _{2}(x)}\overline{v}^{\beta _{2}(x)}\psi \text{ }dx\geq \lambda
\int_{\Omega }\zeta ^{\alpha _{2}(x)}\overline{v}^{\beta _{2}(x)}\psi \text{
}dx,%
\end{array}%
\end{equation*}%
for $\lambda >0$ sufficiently large, $\zeta \in \lbrack \underline{u},%
\overline{u}]$, $w\in \lbrack \underline{v},\overline{v}]$ and $\left(
\varphi ,\psi \right) \in W_{0}^{1,p(x)}\left( \Omega \right) \times
W_{0}^{1,q(x)}\left( \Omega \right) $ with $\varphi ,\psi \geq 0$ a.e. in $%
\Omega $, showing that $\left( \overline{u},\overline{v}\right) $ is a
supersolution of (\ref{p}) for $\lambda >0$ large.
\end{proof}

We are now ready to prove our first main result.

\begin{proof}[Proof of Theorem \protect\ref{T1}]
By using (\ref{h1}), (\ref{h3}), (\ref{21}), (\ref{31}) and (\ref{31*}), we
get
\begin{equation*}
u^{\alpha _{1}(x)}v^{\beta _{1}(x)}\leq \underline{u}^{\alpha _{1}(x)}%
\overline{v}^{\beta _{1}(x)}\leq Cd(x)^{\alpha _{1}(x)}\text{ \ \ in }\Omega
\times \lbrack \underline{u},\overline{u}]\times \lbrack \underline{v},%
\overline{v}]
\end{equation*}%
and
\begin{equation*}
u^{\alpha _{2}(x)}v^{\beta _{2}(x)}\leq \overline{u}^{\alpha _{2}(x)}%
\underline{v}^{\beta _{2}(x)}\leq C^{\prime }d(x)^{\beta _{2}(x)}\text{ \ \
in }\Omega \times \lbrack \underline{u},\overline{u}]\times \lbrack
\underline{v},\overline{v}],
\end{equation*}%
where $C,C^{\prime }>0$ are constants. Then (\ref{h3}) enable us to apply
Theorem \ref{T2} and to conclude that there exists a positive solution $%
(u,v)\in C^{1,\nu }(\overline{\Omega })\times C^{1,\nu }(\overline{\Omega })$
of (\ref{p}), for some $\nu \in (0,1),$ within $[\underline{u},\overline{u}%
]\times \lbrack \underline{v},\overline{v}]$. This completes the proof.
\end{proof}

\section{Proof of Theorem \protect\ref{T12}}

\label{sec5}

For a fixed $\delta >0$ sufficiently small, let $u_{1}$ and $v_{1}$ be
solutions of the problems

\begin{equation}
-\Delta _{p(x)}u_{1}=\lambda ^{\sigma }\left\{
\begin{array}{ll}
w_{1}^{\alpha _{1}(x)} & \text{ in \ }\Omega \backslash \overline{\Omega }%
_{\delta } \\
d(x)^{\alpha _{1}(x)+\beta _{1}(x)} & \text{\ in \ }\Omega _{\delta }%
\end{array}%
\right. ,\text{ }u_{1}=0\text{ \ on }\partial \Omega  \label{40}
\end{equation}%
\begin{equation}
-\Delta _{q(x)}v_{1}=\lambda ^{\sigma }\left\{
\begin{array}{ll}
w_{2}^{\beta _{2}(x)} & \text{ in \ }\Omega \backslash \overline{\Omega }%
_{\delta } \\
d(x)^{\alpha _{2}(x)+\beta _{2}(x)} & \text{\ in \ }\Omega _{\delta }%
\end{array}%
\right. ,\text{ }v_{1}=0\text{ \ on }\partial \Omega  \label{41}
\end{equation}%
where $\Omega _{\delta }$ is defined by (\ref{70}) and $w_{1}$, $w_{2}$ are
solutions of problems (\ref{10}) with $\sigma >1$. Analysis similar to that
in the proof of Theorem \ref{T1}, namely by applying Hardy-Sobolev
Inequality and Minty-Browder Theorem, shows that $u_{1}$ and $v_{1}$ are
unique solutions of (\ref{40}) and (\ref{41}), respectively. On account of
Lemma \ref{L2}, $u_{1}$ and $v_{1}$ satisfy
\begin{equation}
\begin{array}{l}
\frac{w_{1}(x)}{2}\leq u_{1}(x)\quad \mbox{and}\quad \frac{w_{2}(x)}{2}\leq
v_{1}(x)\quad \mbox{in}\quad \overline{\Omega }.%
\end{array}
\label{81}
\end{equation}%
Moreover, similar arguments explored in the proof of \cite[Theorem 4.4]{QZ}
give $u_{1},v_{1}\in C(\overline{\Omega })$ and produce constants $%
c_{0},c_{1}>0,$ with $c_{0}:=c_{0}(\lambda ),$ $c_{1}:=c_{1}(\lambda )$,
such that%
\begin{equation}
\begin{array}{l}
u_{1}(x)\leq c_{0}d(x)^{\theta _{1}}\text{ \ and \ }v_{1}(x)\leq
c_{1}d(x)^{\theta _{2}}\text{ in }\Omega _{\delta },%
\end{array}
\label{82}
\end{equation}%
for some constants $\theta _{1},\theta _{2}\in (0,1),$ with $\theta
_{1},\theta _{2}\approx 1,$ and for $\delta >0$ small.

Let consider the functions $u_{0}$ and $v_{0}$ defined by
\begin{equation}
-\Delta _{p(x)}u_{0}=\left\{
\begin{array}{ll}
1 & \text{ in \ }\Omega \backslash \overline{\Omega }_{\delta } \\
-1 & \text{\ in \ }\Omega _{\delta }%
\end{array}%
\right. ,\text{ }u_{0}=0\text{ \ on }\partial \Omega  \label{83}
\end{equation}%
and
\begin{equation}
-\Delta _{q(x)}v_{0}=\left\{
\begin{array}{ll}
1 & \text{\ in \ }\Omega \backslash \overline{\Omega }_{\delta } \\
-1 & \text{\ in \ }\Omega _{\delta }%
\end{array}%
\right. ,\text{ }v_{0}=0\text{ \ on }\partial \Omega .  \label{83*}
\end{equation}%
According to \cite{YY} and Lemma \ref{L2}, it follows that
\begin{equation}
\begin{array}{l}
c_{3}\min \{\delta ,d(x)\}\leq u_{0}(x)\leq c_{4}\text{ \ and \ }%
c_{3}^{\prime }\min \{\delta ,d(x)\}\leq v_{0}(x)\leq c_{4}^{\prime }\text{
\ in }\Omega ,%
\end{array}
\label{84}
\end{equation}%
where $c_{3},c_{4},c_{3}^{\prime }$ and $c_{4}^{\prime }$ are positive
constants.

We claim that $\left( u_{1},v_{1}\right) \geq \left( u_{0},v_{0}\right) $ in
$\overline{\Omega }$. Indeed, by using (\ref{83}), (\ref{83*}), (\ref{66}), (%
\ref{67}), (\ref{h2}), (\ref{40}) and (\ref{41}), since $\sigma >1$, we have%
\begin{equation*}
\begin{array}{l}
-\Delta _{p(x)}u_{0}=\left\{
\begin{array}{ll}
1 & \text{ in \ }\Omega \backslash \overline{\Omega }_{\delta } \\
-1 & \text{\ in \ }\Omega _{\delta }%
\end{array}%
\right. \leq \lambda ^{\sigma }(C_{1}\lambda ^{\frac{\sigma }{p^{-}-1}%
})^{\alpha _{1}(x)}\leq \lambda ^{\sigma }w_{1}^{\alpha _{1}(x)} \\
\leq \lambda ^{\sigma }\left\{
\begin{array}{ll}
w_{1}^{\alpha _{1}(x)} & \text{if }d(x)>\delta \\
d(x)^{\alpha _{1}(x)+\beta _{1}(x)} & \text{if }d(x)<\delta%
\end{array}%
\right. =-\Delta _{p(x)}u_{1}\text{ \ in }\Omega%
\end{array}%
\end{equation*}%
and%
\begin{equation*}
\begin{array}{l}
-\Delta _{q(x)}v_{0}=\left\{
\begin{array}{ll}
1 & \text{ in \ }\Omega \backslash \overline{\Omega }_{\delta } \\
-1 & \text{\ in \ }\Omega _{\delta }%
\end{array}%
\right. \leq \lambda ^{\sigma }(C_{2}\lambda ^{\frac{\sigma }{q^{-}-1}%
})^{\beta _{2}(x)}\leq \lambda ^{\sigma }w_{2}^{\beta _{2}(x)} \\
\leq \lambda ^{\sigma }\left\{
\begin{array}{ll}
w_{2}^{\beta _{2}(x)} & \text{if }d(x)>\delta \\
d(x)^{\alpha _{2}(x)+\beta _{2}(x)} & \text{if }d(x)<\delta%
\end{array}%
\right. =-\Delta _{q(x)}v_{1}\text{ \ in }\Omega ,%
\end{array}%
\end{equation*}%
provided that $\lambda >0$ is large enough. Then the monotonicity of the
operators $-\Delta _{p(x)}$ and $-\Delta _{q(x)}$ leads to the conclusion.
The claim is proved.

The following result allows us to achieve useful comparison properties.

\begin{proposition}
\label{P1}Assume that (\ref{h4}) and (\ref{h2}) hold. Then, for $\lambda >0$
large enough, we have
\begin{equation}
-\Delta _{p(x)}u_{0}\leq \lambda u_{0}^{\alpha _{1}(x)}v_{0}^{\beta _{1}(x)}%
\text{ \ and \ }-\Delta _{q(x)}v_{0}\leq \lambda u_{0}^{\alpha
_{2}(x)}v_{0}^{\beta _{2}(x)}\text{ \ in }\overline{\Omega }  \label{36}
\end{equation}%
and
\begin{equation}
-\Delta _{p(x)}u_{1}\geq \lambda u_{1}^{\alpha _{1}(x)}v_{1}^{\beta _{1}(x)}%
\text{ \ and \ }-\Delta _{q(x)}v_{1}\geq \lambda u_{1}^{\alpha
_{2}(x)}v_{1}^{\beta _{2}(x)}\text{ \ in }\overline{\Omega }.  \label{36*}
\end{equation}
\end{proposition}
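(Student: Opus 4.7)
The plan is to verify each of the four pointwise inequalities in (\ref{36})--(\ref{36*}) by splitting $\overline{\Omega}$ into the boundary strip $\Omega_\delta$ and the interior $\Omega\setminus\overline{\Omega}_\delta$, and in each region inserting the already-established two-sided bounds on $u_0, v_0, u_1, v_1$ from (\ref{66})--(\ref{67}), (\ref{81})--(\ref{82}) and (\ref{84}). The crucial structural fact I will lean on is that hypothesis (\ref{h4}) forces all four exponents $\alpha_1, \alpha_2, \beta_1, \beta_2$ to be strictly negative, so that a lower bound on the argument becomes an upper bound on the monomial $u^{\alpha_i(x)}v^{\beta_i(x)}$, and conversely.

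For the subsolution statement (\ref{36}) I would first observe that inside $\Omega_\delta$ the left-hand sides both equal $-1<0$, whereas $\lambda u_0^{\alpha_i(x)}v_0^{\beta_i(x)}>0$, so nothing is to be proved. In the interior, (\ref{84}) confines $u_0, v_0$ to bounded intervals bounded away from zero, and the negativity of the exponents then gives $u_0^{\alpha_i(x)}v_0^{\beta_i(x)}\geq C^{\ast}>0$ with $C^{\ast}$ independent of $\lambda$. Multiplying by $\lambda$ and taking $\lambda$ large forces the right-hand side to exceed $1$, which is the value of the corresponding left-hand side in this region.

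For the supersolution statement (\ref{36*}), the key input is (\ref{81}) combined with (\ref{66})--(\ref{67}): these yield $u_1, v_1\geq d(x)/2$ in $\Omega_\delta$ and $u_1, v_1\geq \delta/2$ in $\Omega\setminus\overline{\Omega}_\delta$. Together with the negativity of the exponents they deliver $u_1^{\alpha_i(x)}v_1^{\beta_i(x)}\leq 2^{-\alpha_i(x)-\beta_i(x)}d(x)^{\alpha_i(x)+\beta_i(x)}$ in $\Omega_\delta$ and a uniform constant bound in the interior. Comparing with the explicit right-hand sides in (\ref{40})--(\ref{41}), each of the two supersolution inequalities reduces, on each of the two regions, to an inequality of the form $\lambda^{\sigma-1}\geq K$, where $K$ depends only on $\delta$, on the constants $C_1, C_2$ of (\ref{66})--(\ref{67}), and on the uniform bounds on the exponents. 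Since $\sigma>1$, this holds as soon as $\lambda$ is taken large enough.

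I expect the main obstacle to be exactly the boundary strip $\Omega_\delta$ for the supersolution: one has to recognise that the choice of $d(x)^{\alpha_i(x)+\beta_i(x)}$ as the right-hand side of (\ref{40})--(\ref{41}) is engineered precisely so that the $d(x)$-singularity of $u_1^{\alpha_i(x)}v_1^{\beta_i(x)}$ is matched, and that the remaining $\lambda$-factor coming from (\ref{p}) is absorbed by the surplus $\lambda^{\sigma-1}$. The three other cases are more routine, reducing either to a sign consideration or to a comparison between a bounded constant and a positive power of $\lambda$.
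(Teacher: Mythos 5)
Your handling of (\ref{36}) is correct and is exactly the paper's argument: in $\Omega _{\delta }$ the left-hand sides equal $-1<0<\lambda u_{0}^{\alpha _{i}(x)}v_{0}^{\beta _{i}(x)}$, and in $\Omega \backslash \overline{\Omega }_{\delta }$ the upper bounds $u_{0}\leq c_{4}$, $v_{0}\leq c_{4}^{\prime }$ from (\ref{84}) together with the negativity of all four exponents give a $\lambda $-independent positive lower bound for the right-hand side. Likewise, your treatment of (\ref{36*}) inside the strip $\Omega _{\delta }$ (matching $u_{1},v_{1}\geq d(x)/2$ against the weight $d(x)^{\alpha _{i}(x)+\beta _{i}(x)}$ in (\ref{40})--(\ref{41})) is the paper's computation and reduces correctly to $\lambda ^{\sigma -1}\geq 2^{-\alpha _{i}(x)-\beta _{i}(x)}$.

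The gap is in $\Omega \backslash \overline{\Omega }_{\delta }$ for (\ref{36*}). There the left-hand side of (\ref{40}) is $\lambda ^{\sigma }w_{1}^{\alpha _{1}(x)}$, and $w_{1}$ is \emph{not} bounded independently of $\lambda $: by (\ref{66}) it can be as large as $C_{1}\lambda ^{\sigma /(p^{-}-1)}$, so with $\alpha _{1}<0$ the only $\lambda $-independent data give $\lambda ^{\sigma }w_{1}^{\alpha _{1}(x)}\geq C_{1}^{\alpha _{1}(x)}\lambda ^{\sigma (1+\alpha _{1}(x)/(p^{-}-1))}$. If, as you propose, you discard the fine bound $u_{1}\geq w_{1}/2$ and replace $u_{1}^{\alpha _{1}(x)}v_{1}^{\beta _{1}(x)}$ by the constant $(\delta /2)^{\alpha _{1}(x)+\beta _{1}(x)}$, the inequality you must verify is $\lambda ^{\sigma -1+\sigma \alpha _{1}(x)/(p^{-}-1)}\geq K$, not $\lambda ^{\sigma -1}\geq K$; under (\ref{h2}) the exponent can be negative for a fixed $\sigma >1$ (take $p^{-}$ close to $1$ and $\alpha _{1}$ close to $-(p^{-}-1)$), and then the inequality fails precisely for $\lambda $ large. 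The same defect appears in the second inequality with $w_{2}^{\beta _{2}(x)}$ and $q^{-}$. The paper's proof avoids this by keeping (\ref{81}) in the interior as well: writing $u_{1}^{\alpha _{1}(x)}\leq (w_{1}/2)^{\alpha _{1}(x)}=2^{-\alpha _{1}(x)}w_{1}^{\alpha _{1}(x)}$ and $v_{1}^{\beta _{1}(x)}\leq (\delta /2)^{\beta _{1}(x)}$, the factor $w_{1}^{\alpha _{1}(x)}$ cancels on both sides and the condition genuinely becomes $\lambda ^{\sigma -1}\geq 2^{-\alpha _{1}(x)-\beta _{1}(x)}\delta ^{\beta _{1}(x)}$, independent of $\lambda $. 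So the cancellation you correctly identified in the boundary strip must also be exploited in the interior (with $w_{1},w_{2}$ playing the role of $d$); contrary to your closing remark, it is the interior region, not $\Omega _{\delta }$, where the crude bound breaks down.
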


\begin{proof}
For all $\lambda >0$ we have
\begin{equation}
-u_{0}^{-\alpha _{1}(x)}v_{0}^{-\beta _{1}(x)}\leq 0<\lambda \text{ \ in }%
\Omega _{\delta },  \label{32}
\end{equation}%
\begin{equation}
-u_{0}^{-\alpha _{2}(x)}v_{0}^{-\beta _{2}(x)}\leq 0<\lambda \text{ \ in }%
\Omega _{\delta }.  \label{32*}
\end{equation}%
From (\ref{84}), (\ref{h2}) and (\ref{h4}), we have
\begin{equation}
u_{0}^{-\alpha _{1}(x)}v_{0}^{-\beta _{1}(x)}\leq c_{4}^{-\alpha
_{1}(x)}(c_{4}^{\prime })^{-\beta _{1}(x)}\leq \lambda \text{ \ in }\Omega
\backslash \overline{\Omega }_{\delta },  \label{35}
\end{equation}%
and
\begin{equation}
u_{0}^{-\alpha _{2}(x)}v_{0}^{-\beta _{2}(x)}\leq c_{4}^{-\alpha
_{2}(x)}(c_{4}^{\prime })^{-\beta _{2}(x)}\leq \lambda \text{ \ in }\Omega
\backslash \overline{\Omega }_{\delta },  \label{35*}
\end{equation}%
provided that $\lambda $ is sufficiently large. Then combining (\ref{32}) -
( \ref{35*}) together leads to (\ref{36}).

Now let us show (\ref{36*}). By (\ref{40}), (\ref{81}), (\ref{h2}), (\ref{h4}%
), (\ref{66}) and (\ref{67}), since $\sigma >1,$ one has
\begin{equation*}
\begin{array}{l}
u_{1}^{-\alpha _{1}(x)}v_{1}^{-\beta _{1}(x)}(-\Delta _{p(x)}u_{1})=\lambda
^{\sigma }u_{1}^{-\alpha _{1}(x)}v_{1}^{-\beta _{1}(x)}\left\{
\begin{array}{ll}
w_{1}^{\alpha _{1}(x)} & \text{if }d(x)\geq \delta \\
d(x)^{\alpha _{1}(x)+\beta _{1}(x)} & \text{if }d(x)<\delta%
\end{array}%
\right. \\
\geq \lambda ^{\sigma }(\frac{w_{1}}{2})^{-\alpha _{1}(x)}(\frac{w_{2}}{2}%
)^{-\beta _{1}(x)}\left\{
\begin{array}{ll}
w_{1}^{\alpha _{1}(x)} & \text{if }d(x)\geq \delta \\
d(x)^{\alpha _{1}(x)+\beta _{1}(x)} & \text{if }d(x)<\delta%
\end{array}%
\right. \\
\geq \lambda ^{\sigma }\left\{
\begin{array}{ll}
2^{\alpha _{1}(x)+\beta _{1}(x)}\delta ^{-\beta _{1}(x)} & \text{if }%
d(x)\geq \delta \\
(\frac{d(x)}{2})^{-\alpha _{1}(x)-\beta _{1}(x)}d(x)^{\alpha _{1}(x)+\beta
_{1}(x)} & \text{if }d(x)<\delta%
\end{array}%
\right. \\
\\
\geq \lambda ^{\sigma }2^{\alpha _{1}(x)+\beta _{1}(x)}\delta ^{-\beta
_{1}(x)}\geq \lambda \text{ \ in }\overline{\Omega },%
\end{array}%
\end{equation*}%
provided that $\lambda >0$ is sufficiently large. Similarly, since $\sigma
>1,$ from (\ref{41}), (\ref{81}), (\ref{h2}), (\ref{h4}), (\ref{66}) and (%
\ref{67}), we get
\begin{equation*}
\begin{array}{l}
u_{1}^{-\alpha _{2}(x)}v_{1}^{-\beta _{2}(x)}(-\Delta _{q(x)}v_{1})=\lambda
^{\sigma }u_{1}^{-\alpha _{2}(x)}v_{1}^{-\beta _{2}(x)}\left\{
\begin{array}{ll}
w_{2}^{\beta _{2}(x)} & \text{if }d(x)\geq \delta \\
d(x)^{\alpha _{2}(x)+\beta _{2}(x)} & \text{if }d(x)<\delta%
\end{array}%
\right. \\
\geq \lambda ^{\sigma }(\frac{w_{1}}{2})^{-\alpha _{2}(x)}(\frac{w_{2}}{2}%
)^{-\beta _{2}(x)}\left\{
\begin{array}{ll}
w_{2}^{\beta _{2}(x)} & \text{if }d(x)\geq \delta \\
d(x)^{\alpha _{2}(x)+\beta _{2}(x)} & \text{if }d(x)<\delta%
\end{array}%
\right. \\
\geq \lambda ^{\sigma }\left\{
\begin{array}{ll}
2^{\alpha _{2}(x)+\beta _{2}(x)}\delta ^{-\alpha _{2}(x)} & \text{if }%
d(x)\geq \delta \\
(\frac{d(x)}{2})^{-\alpha _{2}(x)-\beta _{2}(x)}d(x)^{\alpha _{2}(x)+\beta
_{2}(x)} & \text{if }d(x)<\delta%
\end{array}%
\right. \\
\geq \lambda ^{\sigma }2^{\alpha _{2}(x)+\beta _{2}(x)}\delta ^{-\alpha
_{2}(x)}\geq \lambda \text{ \ in }\overline{\Omega },%
\end{array}%
\end{equation*}%
provided that $\lambda >0$ is sufficiently large. This shows (\ref{36*}) and
ends the proof.
\end{proof}

Now we are ready to prove our second main result.

\begin{proof}[Proof of Theorem \protect\ref{T12}]
According to Proposition \ref{P1}, functions $(u_{0},v_{0})$ and $%
(u_{1},v_{1})$ verify the inequalities (\ref{6**}) and (\ref{7**}) in
Theorem \ref{T3}, respectively. In addition, by (\ref{h2}), (\ref{h4}) and (%
\ref{84}), we get
\begin{equation*}
u^{\alpha _{1}(x)}v^{\beta _{1}(x)}\leq u_{0}^{\alpha _{1}(x)}v_{0}^{\beta
_{1}(x)}\leq \check{C}d(x)^{\alpha _{1}(x)+\beta _{1}(x)}\quad \mbox{in}%
\quad \Omega
\end{equation*}%
\begin{equation*}
u^{\alpha _{2}(x)}v^{\beta _{2}(x)}\leq u_{0}^{\alpha _{2}(x)}v_{0}^{\beta
_{2}(x)}\leq \hat{C}d(x)^{\alpha _{2}(x)+\beta _{2}(x)}\quad \mbox{in}\quad
\Omega
\end{equation*}%
for $(u,v)\in \lbrack u_{0},u_{1}]\times \lbrack v_{0},v_{1}],$ where $%
\check{C},\hat{C}>0$ are constants. Then (\ref{h4}), (\ref{h2}), (\ref{h4**}%
), (\ref{82}) and (\ref{84}) allow to verify that the assumptions in the
Theorem \ref{T3} are satisfied. Thus, there exists a positive solution $%
(u,v)\in C^{1,\nu }(\overline{\Omega })\times C^{1,\nu }(\overline{\Omega })$
of (\ref{p}), for some $\nu \in (0,1),$ within $[u_{0},u_{1}]\times \lbrack
v_{0},v_{1}]$. This completes the proof.
\end{proof}

\begin{acknowledgement}
The work was accomplished while the second author was visiting the
University Federal of Campina Grande with CNPq-Brazil fellowship
402792/2015-7. He thanks for hospitality.
\end{acknowledgement}

\end{document}